\title{Cyber-Physical Systems under Attack -- Part I:\\ Models  and
  Fundamental Limitations}
\title{Attack Detection and Identification in Cyber-Physical Systems
  -- Part I:\\ Models and Fundamental Limitations}
\author{Fabio Pasqualetti, Florian D\"orfler, and Francesco Bullo
  \thanks{This material is based upon work supported in part by NSF grant
    CNS-1135819 and by the Institute for Collaborative Biotechnologies
    through grant W911NF-09-0001 from the U.S. Army Research Office.}
  \thanks{Fabio Pasqualetti, Florian D\"orfler, and Francesco Bullo are
    with the Center for Control, Dynamical Systems and Computation,
    University of California at Santa Barbara, {\tt
      \{fabiopas,dorfler,bullo\}@engineering.ucsb.edu}}%
}
\newtheorem{theorem}{Theorem}[section]
\newtheorem{lemma}[theorem]{Lemma}
\newtheorem{definition}{Definition}
\newtheorem{remark}{Remark}
\newtheorem{example}{Example}
\newcommand\oprocendsymbol{\hbox{$\square$}}
\newcommand\oprocend{\relax\ifmmode\else\unskip\hfill\fi\oprocendsymbol}
\newenvironment{pfof}[1]{\vspace{1ex}\noindent{\itshape Proof of
    #1:}\hspace{0.5em}} {\hfill\QED\vspace{1ex}}
\newenvironment{pf}{\vspace{1ex}\noindent{\itshape
    Proof:}\hspace{0.5em}} {\hfill\QED\vspace{1ex}}
\newcommand{\setdef}[2]{\{#1 \; : \; #2\}}
\newcommand{\subscr}[2]{{#1}_{\textup{#2}}}
\newcommand{\supscr}[2]{{#1}^{\textup{#2}}}
\newcommand{\until}[1]{\{1,\dots,#1\}}
\newcommand{\Ker}{\operatorname{Ker}}
\newcommand{\Image}{\operatorname{Im}}
\newcommand{\V}{\mathcal{V}}
\newcommand{\real}{\mathbb{R}}
\newcommand{\complex}{\mathbb{C}}
\newcommand{\transpose}{\mathsf{T}} 
\newcommand{\mc}{\mathcal}
\begin{document}
\maketitle

\begin{abstract}
  Cyber-physical systems integrate computation, communication, and
  physical capabilities to interact with the physical world and
  humans. Besides failures of components, cyber-physical systems are
  prone to malignant attacks, and specific analysis tools as well as
  monitoring mechanisms need to be developed to enforce system
  security and reliability. This paper proposes a unified framework to
  analyze the resilience of cyber-physical systems against attacks
  cast by an omniscient adversary. We model cyber-physical systems as
  linear descriptor systems, and attacks as exogenous unknown
  inputs. Despite its simplicity, our model captures various
  real-world cyber-physical systems, and it includes and generalizes
  many prototypical attacks, including stealth, (dynamic) false-data
  injection and replay attacks. First, we characterize fundamental
  limitations of static, dynamic, and active monitors for attack
  detection and identification. Second, we provide constructive
  algebraic conditions to cast undetectable and unidentifiable
  attacks. Third, by using the system interconnection structure, we
  describe graph-theoretic conditions for the existence of
  undetectable and unidentifiable attacks. Finally, we validate our
  findings through some illustrative examples with different
  cyber-physical systems, such as a municipal water supply network and
  two electrical power grids.
\end{abstract}

\section{Introduction}

\emph{Cyber-physical systems} arise from the tight integration of
physical processes, computational resources, and communication
capabilities. More precisely, processing units monitor and control
physical processes by means of sensors and actuators
networks. Examples of cyber-physical systems include transportation
networks, power generation and distribution networks, water and gas
distribution networks, and advanced communication systems. Due to the
crucial role of cyber-physical systems in everyday life,
cyber-physical security needs to be promptly addressed.

Besides failures and attacks on the physical infrastructure,
cyber-physical systems are also prone to cyber attacks on their data
management and communication layer. Recent studies and real-world
incidents have demonstrated the inability of existing security methods
to ensure a safe and reliable functionality of cyber-physical
infrastructures against unforeseen failures and, possibly, external
attacks \cite{ARM-RLE:10,SS-AH-MG:12,JS-MM:07,AAC-SA-SS:08}. The protection
of critical infrastructures is, as of today, one of the main focus of
the Department of Homeland Security\,\cite{GEA-DML:05}.

Concerns about security of control systems are not new, as the
numerous manuscripts on systems fault detection, isolation, and
recovery testify; see for example
\cite{MB-IVN:93,SXD:08}. Cyber-physical systems, however, suffer from
specific vulnerabilities which do not affect classical control
systems, and for which appropriate detection and identification
techniques need to be developed. For instance, the reliance on
communication networks and standard communication protocols to
transmit measurements and control packets increases the possibility of
intentional and worst-case (cyber) attacks against physical plants. On
the other hand, information security methods, such as authentication,
access control, message integrity, and cryptography methods, appear
inadequate for a satisfactory protection of cyber-physical systems.
Indeed, these security methods do not exploit the compatibility of the
measurements with the underlying physical process and control
mechanism, which are the ultimate objective of a protection scheme
\cite{AAC-SA-BS-AG-AP-SSS:09}. Moreover, such information security
methods are not effective against insider attacks carried out by
authorized entities, as in the famous Maroochy Water Breach case
\cite{JS-MM:07}, and they also fail against attacks targeting directly
the physical dynamics \cite{CLD-JVS-FA:96}.

\noindent
\textbf{Related work.} The analysis of vulnerabilities of
cyber-physical systems to external attacks has received increasing
attention in the last years. The general approach has been to study
the effect of specific attacks against particular systems. For
instance, in \cite{SA-AC-SS:09} \emph{deception} and \emph{denial of
  service} attacks against a networked control system are introduced,
and, for the latter ones, a countermeasure based on semi-definite
programming is proposed. Deception attacks refer to the possibility of
compromising the integrity of control packets or measurements, and
they are cast by altering the behavior of sensors and
actuators. Denial of service attacks, instead, compromise the
availability of resources by, for instance, jamming the communication
channel. In \cite{YL-MKR-PN:09} \emph{false data} injection attacks
against static state estimators are introduced. False data injection
attacks are specific deception attacks in the context of static
estimators. It is shown that undetectable false data injection attacks
can be designed even when the attacker has limited resources. In a
similar fashion, \emph{stealthy deception attacks} against the
Supervisory Control and Data Acquisition system are studied, among
others, in \cite{AT-AS-HS-KHJ-SSS:10,SA-XL-SS-AMB:10}. In
\cite{YM-BS:10a} the effect of \emph{replay attacks} on a control
system is discussed. Replay attacks are cast by hijacking the sensors,
recording the readings for a certain amount of time, and repeating
such readings while injecting an exogenous signal into the system. It
is shown that this type of attack can be detected by injecting a
signal unknown to the attacker into the system. In \cite{RS:11} the
effect of \emph{covert attacks} against networked control systems is
investigated. Specifically, a parameterized decoupling structure
allows a covert agent to alter the behavior of the physical plant
while remaining undetected from the original controller. In
\cite{MZ-SM:11} a resilient control problem is studied, in which
control packets transmitted over a network are corrupted by a human
adversary. A receding-horizon Stackelberg control law is proposed to
stabilize the control system despite the attack. Recently the problem
of estimating the state of a linear system with corrupted measurements
has been studied \cite{FH-PT-SD:11}. More precisely, the maximum
number of faulty sensors that can be tolerated is characterized, and a
decoding algorithm is proposed to detect corrupted
measurements. Finally, security issues of some specific cyber-physical
systems have received considerable attention, such as power networks
\cite{ARM-RLE:10,SS-AH-MG:12,AT-AS-HS-KHJ-SSS:10,DG-HS:10,ES:04,FP-AB-FB:10u,FP-FD-FB:11i,AHMR-ALG:11,CLD-JVS-FA:96},
linear networks with misbehaving components
\cite{SS-CH:10a,FP-AB-FB:09b}, and water networks
\cite{JS-MM:07,SA-XL-SS-AMB:10,RS:11,DGE-MMP:10}.

\noindent
\textbf{Contributions.} The contributions of this paper are as
follows. First, we describe a unified modeling framework for
cyber-physical systems and attacks. Motivated by existing
cyber-physical systems and proposed attack scenarios, we model a
cyber-physical system under attack as a descriptor system subject to
unknown inputs affecting the state and the measurements. For our
model, we define the notions of {\em detectability} and {\em
  identifiability} of an attack by its effect on output measurements.
Informed by the classic work on geometric control
theory~\cite{WMW:85}, our framework includes the \emph{deterministic
  static detection problem} considered in
\cite{YL-MKR-PN:09,AT-AS-HS-KHJ-SSS:10}, and the prototypical
deception and denial of service \cite{SA-AC-SS:09}, stealth
\cite{DG-HS:10}, (dynamic) false-data injection \cite{YM-BS:10b},
replay \cite{YM-BS:10a}, and covert attacks \cite{RS:11} as special
cases. Second, we show the fundamental limitations of static, dynamic,
and active detection and identification procedures. Specifically, we
show that static detection procedures are unable to detect any attack
affecting the dynamics, and that attacks corrupting the measurements
can be easily designed to be undetectable. On the contrary, we show
that undetectability in a dynamic setting is much harder to achieve
for an attacker. Specifically, a cyber-physical attack is undetectable
if and only if the attackers' signal excites uniquely the zero
dynamics of the input/output system. Additionally, we show that active
monitors capable of injecting test signals are as powerful as dynamic
(passive) monitors, since an attacker can design undetectable and
unidentifiable attacks without knowing the signal injected by the
monitor into the system. This analysis bring us also to the conclusion
that undetectable attacks can be cast even without knowledge of system
noise. Third, we provide a graph theoretic characterization of
undetectable attacks. Specifically, we borrow some tools from the
theory of structured systems, and we identify conditions on the system
interconnection structure for the existence of undetectable
attacks. These conditions are \emph{generic}, in the sense that they
hold for almost all numerical systems with the same structure, and
they can be efficiently verified. As a complementary result, we extend
a result of \cite{JWW:91} on structural left-invertibility to regular
descriptor systems. Fourth and finally, we illustrate the potential
impact of our theoretical findings through compelling examples. In
particular, we design (i) an undetectable state attack to destabilize
the WSSC 3-machine 6-bus power system, (ii) an undetectable output
attack for the IEEE 14 bus system, and (iii) an undetectable state and
output attack to steal water from a reservoir of the EPANET network
model 3. Through these examples we show the advantages of dynamic
monitors against static ones, and we provide insight on the design of
attacks.


\noindent
\textbf{Paper organization.} The remainder of the paper is organized
as follows. Section \ref{sec:example_systems} presents some examples
of cyber-physical systems. Section \ref{sec:setup} contains our models
of cyber-physical systems, attacks, and monitors.
Our main results are presented in Section \ref{sec:static_dynamic} and
in Section \ref{sec:graph_conditions}. In particular, in Section
\ref{sec:static_dynamic} we describe the fundamental limitations of
static, dynamic, and active detectors, and we provide constructive
algebraic conditions for the existence of undetectable and
unidentifiable attacks. In Section \ref{sec:graph_conditions},
instead, we derive graph-theoretic conditions for the existence of
undetectable and unidentifiable attacks. Finally, Section
\ref{sec:example} and Section \ref{sec:conclusion} contain,
respectively, our illustrative examples and our conclusion.


\section{Examples of cyber-physical
  systems}\label{sec:example_systems}
We now motivate our study by introducing important cyber-physical
systems requiring advanced security mechanisms.

\subsection{Power networks}\label{example:power}
Future power grids will combine physical dynamics with a sophisticated
coordination infrastructure. The cyber-physical security of the grid
has been identified as an issue of primary concern
\cite{ARM-RLE:10,SS-AH-MG:12}, which has recently attracted the interest of
the control and power systems communities, see
\cite{AT-AS-HS-KHJ-SSS:10,DG-HS:10,ES:04,FP-AB-FB:10u,FP-FD-FB:11i,AHMR-ALG:11}.

We adopt the small-signal version of the classical
structure-preserving power network model; see
\cite{ES:04,FP-AB-FB:10u} for a detailed derivation from the full
nonlinear structure-preserving power network model. Consider a
connected power network consisting of $n$ generators
$\{g_{1},\dots,g_{n}\}$ and $m$ load buses
$\{b_{n+1},\dots,b_{n+m}\}$. The interconnection structure of the
power network is encoded by a connected susceptance-weighted
graph. The generators $g_{i}$ and buses $b_{i}$ are the vertex set of
this graph, and the edges are the transmission lines $\{b_{i},b_{j}\}$
weighted by the susceptance between buses $b_{i}$ and $b_{j}$, as well
as the connections $\{g_{i},b_{i}\}$ weighted by the transient
susceptance between generator $g_{i}$ and its adjacent bus
$b_{i}$. The Laplacian associated with the susceptance-weighted graph
is the symmetric susceptance matrix $\mc L = \left[
\begin{smallmatrix}
  \subscr{\mc L}{gg} & \subscr{\mc L}{gl}\\
  \subscr{\mc L}{lg} & \subscr{\mc L}{ll}
\end{smallmatrix}
\right] \in \mathbb R^{(n+m) \times (n+m)}$, where the first $n$ rows
are associated with the generators and the last $m$ rows correspond to
the buses. The dynamic model of the power network is
\begin{align}
  \label{eq: power network descriptor system model} 
  	\begin{bmatrix}
         I\!\!  & 0\!\!  & 0\\
         0\!\! & \subscr{M}{g}\!\! & 0\\
         0\!\! & 0\!\!  & 0    
  \end{bmatrix}
  \begin{bmatrix}
  \dot \delta(t) \\ \dot \omega(t) \\ \dot\theta(t)
  \end{bmatrix}
  \!=\!
  -
  \begin{bmatrix}
         0\!\! & -I\!\! & 0\\
         \subscr{\mc L}{gg}\!\! & \subscr{D}{g}\!\! & \subscr{\mc L}{gl}\\
         \subscr{\mc L}{lg}\!\! & 0\!\! & \subscr{\mc L}{ll}
  \end{bmatrix}
  \!\!
      \begin{bmatrix}
	\delta(t) \\ \omega(t) \\ \theta(t)
  \end{bmatrix}
   \!+\!
    \begin{bmatrix}
    0 \\ P_{\omega}(t) \\ P_{\theta}(t)
    \end{bmatrix}
    \!, 
\end{align} 
where $\delta(t) \in \mathbb R^{n}$ and $\omega(t) \in \mathbb R^{n}$
denote the generator rotor angles and frequencies, and $\theta(t) \in
\mathbb R^{m}$ are the voltage angles at the buses. The terms
$\subscr{M}{g}$ and $D_\textup{g}$ are the diagonal matrices of the
generator inertial and damping coefficients, and the inputs
$P_{\omega}(t)$ and $P_{\theta}(t)$ are due to {\em known} changes in
mechanical input power to the generators or real power demand at the
loads.

\subsection{Mass transport networks}\label{example:water}
Mass transport networks are prototypical examples of cyber-physical
systems modeled by differential-algebraic equations, such as gas
transmission and distribution networks \cite{AO:87}, large-scale
process engineering plants \cite{AK-PD:99}, and water
networks. Examples of water networks include open channel flows
\cite{XL-VF:09} for irrigation purposes and municipal water networks
\cite{JB-BG-MCS:09,PFB-KEL-BWK:06}. The vulnerability of open channel
networks to cyber-physical attacks has been studied in
\cite{SA-XL-SS-AMB:10,RS:11}, and municipal water networks are also
known to be susceptible to attacks on the hydraulics \cite{JS-MM:07}
and biochemical contamination threats \cite{DGE-MMP:10}.

We focus on the hydraulics of a municipal water distribution network,
as modeled in \cite{JB-BG-MCS:09,PFB-KEL-BWK:06}. The water network
can be modeled as a directed graph with node set consisting of
reservoirs, junctions, and storage tanks, and with edge set given by
pipes, pumps, and valves that are used to convey water from source
points to consumers. The key variables are the pressure head $h_{i}$
at each node $i$ in the network as well as the flows $Q_{ij}$ from
node $i$ to $j$. The hydraulic model governing the network dynamics
includes constant reservoir heads, flow balance equations at junctions
and tanks, and pressure difference equations along all edges:
\begin{align}
  \begin{split}
    \mbox{reservoir } i:&\quad
    h_{i} = \supscr{h_{i}}{reservoir} = \text{constant}\,,
    \\
    \mbox{junction } i:&\quad
    d_{i} =  \sum\nolimits_{j \to i} \!Q_{ji} - \sum\nolimits_{i \to k} \!Q_{ik} \,, 
    \\
    \mbox{tank } i:&\quad
    A_{i} \dot h_{i} =  \sum\nolimits_{j \to i} \!Q_{ji} - \sum\nolimits_{i \to k} \!Q_{ik}\,, 
    \\
    \mbox{pipe } (i,j):&\quad
    Q_{ij} = Q_{ij}(h_{i} - h_{j}) \,,
    \\
    \mbox{pump } (i,j):&\quad
    h_{j} - h_{i} = + \supscr{\Delta h_{ij}}{pump} = \text{constant}\,,
    \\
    \mbox{valve } (i,j):&\quad
    h_{j} - h_{i} = - \supscr{\Delta h_{ij}}{valve} = \text{constant}\,.
  \end{split}
  \label{eq:water_network_model}
\end{align}
Here $d_{i}$ is the demand at junction $i$, $A_{i}$ is the (constant)
cross-sectional area of storage tank $i$, and the notation ``$j \to
i$'' denotes the set of nodes $j$ connected to node $i$. The flow
$Q_{ij}$ depends on the pressure drop $h_{i} - h_{j}$ along pipe
according to the Hazen-Williams equation $Q_{ij}(h_{i} - h_{j}) =
g_{ij} |h_{i} - h_{j}|^{1/1.85-1} \cdot (h_{i} - h_{j})$, where
$g_{ij}>0$ is the pipe conductance.


Other interesting examples of cyber-physical systems captured by our
modeling framework are sensor networks, dynamic Leontief models of
multi-sector economies, mixed gas-power energy networks, and
large-scale control systems.

\section{Mathematical Modeling Of Cyber-physical Systems, Monitors,
  and Attacks}\label{sec:setup}
In this section we model cyber-physical systems under attack as linear
time-invariant descriptor systems subject to unknown inputs. This
modeling framework is very general and includes most of the existing
cyber-physical models, attacks, and fault scenarios. Indeed, as shown
in Section \ref{sec:example_systems}, many interesting real-world
cyber-physical systems contain conserved physical quantities leading
to differential-algebraic system descriptions, and, as we show later,
most attack and fault scenarios can be modeled by additive inputs
affecting the state and the measurements.


\smallskip
\noindent \textbf{Model of cyber-physical systems under attack.}
We consider the linear time-invariant descriptor system%
\footnote{The results stated in this paper for continuous-time
  descriptor systems hold also for discrete-time descriptor systems
  and nonsingular systems. Moreover, we neglect the presence of known
  inputs, since, due to the linearity of system \eqref{eq:
    cyber_physical_fault}, they do not affect our results on the
  detectability and identifiability of unknown input attacks.}
\begin{align}\label{eq: cyber_physical_fault}
  \begin{split}
    E \dot x(t) &= Ax(t) + Bu(t),\\
    y(t) &= C x(t) + Du(t),
  \end{split}
\end{align}
where $x(t) \in \real^n$, $y(t) \in \real^p$, $E \in \real^{n \times
  n}$, $A \in \real^{n \times n}$, $B \in \real^{n \times m}$, $C \in
\real^{p \times n}$, and $D \in \real^{p \times m}$. Here the matrix
$E$ is possibly singular, and the input terms $Bu(t)$ and $Du(t)$ are
unknown signals describing disturbances affecting the plant. Besides
reflecting the genuine failure of systems components, these
disturbances model the effect of an attack against the cyber-physical
system (see below for our attack model). For notational convenience
and without affecting generality, we assume that each state and output
variable can be independently compromised by an attacker. Thus, we let
$B = \begin{bmatrix}I , 0\end{bmatrix}$ and $D = \begin{bmatrix}0 ,
  I\end{bmatrix}$ be partitioned into identity and zero matrices of
appropriate dimensions, and, accordingly, $u(t) = \begin{bmatrix}
  u_{x}(t)^{\transpose},u_{y}(t)^{\transpose} \end{bmatrix}^{\transpose}$.
Hence, the \emph{attack} $(Bu(t),Du(t)) = (u_{x}(t),u_{y}(t))$ can be
classified as \emph{state attack} affecting the system dynamics and as
\emph{output attack} corrupting directly the measurements vector.

The attack signal $t \mapsto u(t) \in \mathbb R^{n+p}$
depends upon the specific attack strategy. In the presence of $k \in
\mathbb{N}_0$, $k \le n+p$, attackers indexed by the {\em attack set}
$K \subseteq \until{n+p}$ only and all the entries $K$ of $u(t)$ are
nonzero over time. To underline this sparsity relation, we sometimes
use $u_K(t)$ to denote the {\em attack mode}, that is the subvector of
$u(t)$ indexed by $K$. Accordingly, the pair $(B_{K},D_{K})$, where
$B_K$ and $D_K$ are the submatrices of $B$ and $D$ with columns in
$K$, to denote the {\em attack signature}. Hence, $Bu(t) = B_K u_K
(t)$, and $Du(t) = D_K u_K (t)$. Since the matrix $E$ may be singular,
we make the following assumptions on system \eqref{eq:
  cyber_physical_fault}:
\begin{enumerate}\setcounter{enumi}{2}
\item[(A1)] the pair $(E,A)$ is regular, that is, $\textup{det}(sE - A)$ does
  not vanish identically,
\item[(A2)] the initial condition $x(0) \in \mathbb R^{n}$ is
  consistent, that is, $(Ax(0) + B u(0)) \perp \Ker(E^{\transpose}) = 0$; and
\item[(A3)] the input signal $u(t)$ is smooth.
\end{enumerate}
The regularity assumption (A1) assures the existence of a unique
solution $x(t)$ to \eqref{eq: cyber_physical_fault}. Assumptions (A2)
and (A3) simplify the technical presentation in this paper since they
guarantee smoothness of the state trajectory $x(t)$ and the
measurements $y(t)$; see \cite[Lemma 2.5]{TG:93} for further
details. 
The degree of smoothness in assumption (A3) depends on the index of
$(E,A)$, see \cite[Theorem 2.42]{PK-VLM:06}, and continuity of $u(t)$
is sufficient for the index-one examples presented in Section \ref{sec:example_systems}. In Section
\ref{Subsection: smoothness issues} we discuss the results in this
paper if assumptions (A2) and (A3) are dropped.

\smallskip
\noindent \textbf{Model of static, dynamic, and active monitors.}
A \emph{monitor} is a pair $(\Phi, \gamma(t))$, where $\Phi \, :\,
\Lambda \,\rightarrow \, \Psi$ is an algorithm, and $\gamma \,:\,
\real \mapsto \mathbb R^{n+p}$ is a signal. In particular, $\Lambda$
is the algorithm input to be specified later, $\Psi =
\{\psi_1,\psi_2\}$, with $\psi_1 \in \{\textup{True},\textup{False}\}$
and $\psi_2 \subseteq \until{n+p}$, is the algorithm output, and $(B
\gamma (t),D \gamma (t))$ is an auxiliary input injected
by the monitor into the system \eqref{eq: cyber_physical_fault}. In
this work we consider the following classes of monitors for the system
\eqref{eq: cyber_physical_fault}.

\begin{definition}{\bf \emph{(Static monitor)}}\label{static_monitor}
  A \emph{static monitor} is a monitor with $\gamma (t) = 0 \; \forall
  t \in \mathbb{R}_{\ge 0}$, and $\Lambda = \{C,y(t) \; \forall t \in
  \mathbb{N}\}$.
\end{definition}

Note that static monitors do not exploit relations among measurements
taken at different time instants. An example of static monitor is the
\emph{bad data detector} \cite{AA-AGS:04}.

\begin{definition}{\bf \emph{(Dynamic monitor)}}\label{dynamic_monitor}
  A \emph{dynamic monitor} is a monitor with $\gamma (t) = 0 \;
  \forall t \in \mathbb{R}_{\ge 0}$, and $\Lambda = \{E,A,C,y(t)
  \;\forall t \in \mathbb{R}_{\ge 0}\}$.
\end{definition}
Differently from static monitors, dynamic monitors have knowledge of
the system dynamics generating $y(t)$ and may exploit temporal
relations among different measurements. The filters defined in
\cite{FP-FD-FB:11i} are examples of dynamic monitors.

\begin{definition}{\bf \emph{(Active monitor)}}\label{dynamic_monitor}
  An \emph{active monitor} is a monitor with $\gamma (t) \neq 0$ for
  some $t \in \mathbb{R}_{\ge 0}$, and $\Lambda = \{E,A,C,y(t)
  \;\forall t \in \mathbb{R}_{\ge 0}\}$.
\end{definition}
Active monitors are dynamic monitors with the ability of modifying the
system dynamics through an input. An example of active monitor is
presented in \cite{YM-BS:10a} to detect replay attacks. The objective
of a monitor is twofold:

\begin{definition}{\bf \emph{(Attack
      detection)}}\label{attack_detection}
  A nonzero attack $(B_K u_K (t), D_K u_K(t))$ is detected by a monitor if $\psi_1 =
  \textup{True}$.
\end{definition}

\begin{definition}{\bf \emph{(Attack identification)}}\label{attack_identification}
  A nonzero attack $(B_K u_K (t), D_K u_K(t))$ is identified by a
  monitor if $\psi_2 = K$.
\end{definition}

An attack is called \emph{undetectable} (respectively
\emph{unidentifiable}) by a monitor if it fails to be detected
(respectively identified) by every monitor in the same class. Of
course, an undetectable attack is also unidentifiable, since it cannot
be distinguished from the zero attack. By extension, an attack set $K$
is undetectable (respectively unidentifiable) if there exists an
undetectable (respectively unidentifiable) attack $(B_K u_K,D_K u_K)$.

\smallskip
\noindent \textbf{Model of attacks.}
In this work we consider colluding omniscient attackers with the
ability of altering the cyber-physical dynamics through exogenous
inputs. In particular we let the attack $(B u(t),Du(t))$ in \eqref{eq:
  cyber_physical_fault} be designed based on knowledge of the system
structure and parameters $E, A, C$, and the full state $x(t)$ at all
times. Additionally, attackers have unlimited computation
capabilities, and their objective is to disrupt the physical state or
the measurements while avoiding detection.

\smallskip
\begin{remark}{\bf \emph{(Existing attack strategies as subcases)}}
  \begin{figure}[tb]
    \centering \subfigure[Static stealth attack]{
      \includegraphics[width=.485\columnwidth]{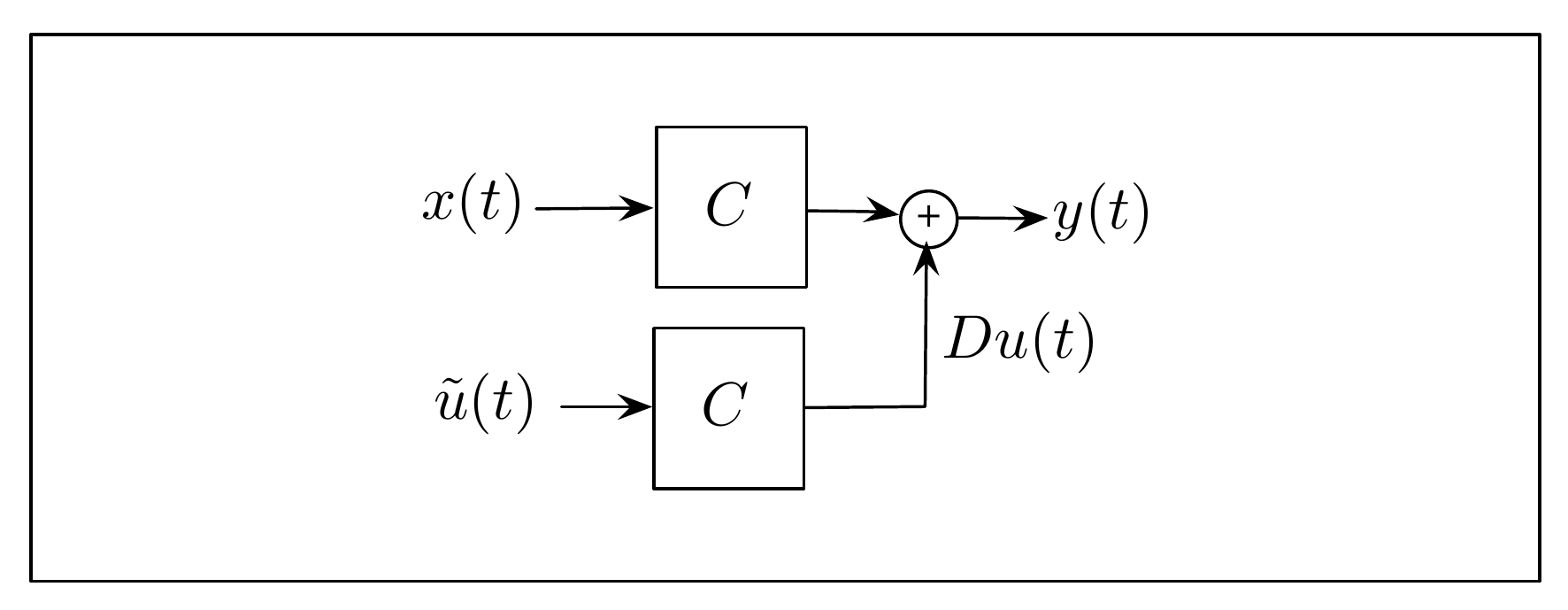}
      \label{stealth_attack}
    } \!\!\!\!\!\!\subfigure[Replay attack]{
      \includegraphics[width=.485\columnwidth]{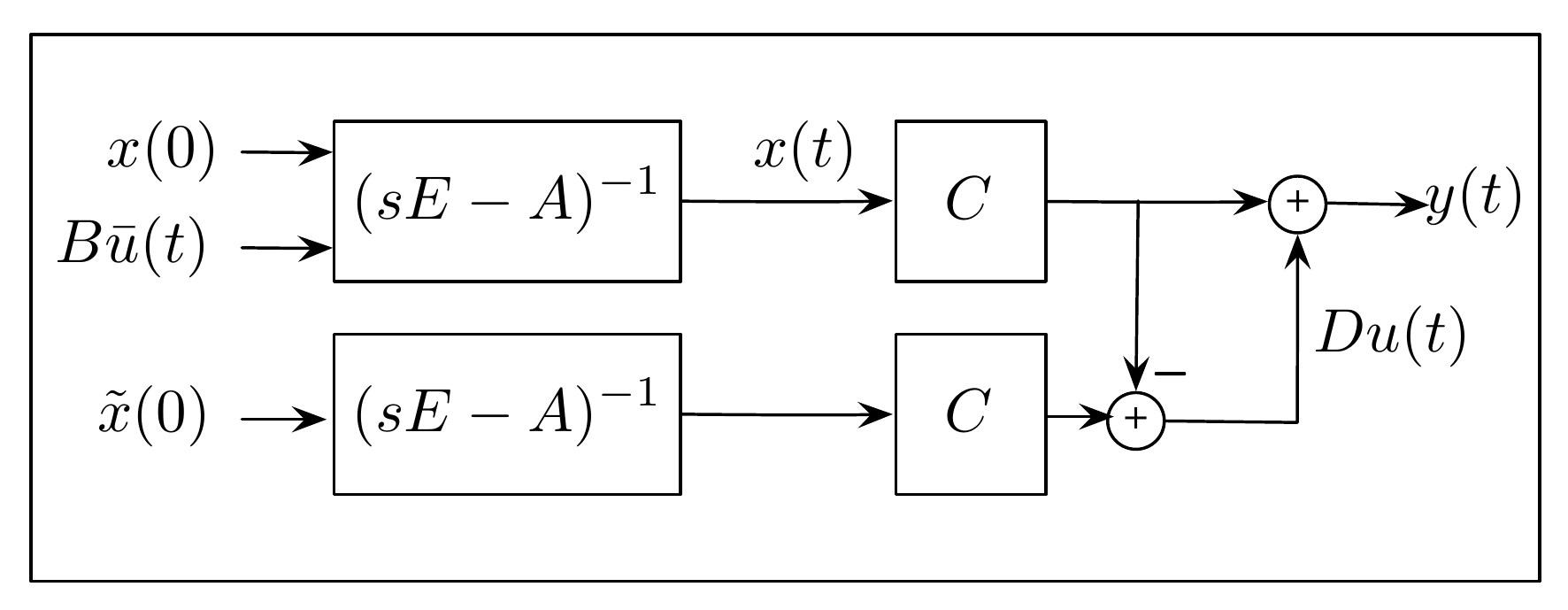}
      \label{replay_attack}
    }\\
    \subfigure[Covert attack]{
   \includegraphics[width=.485\columnwidth]{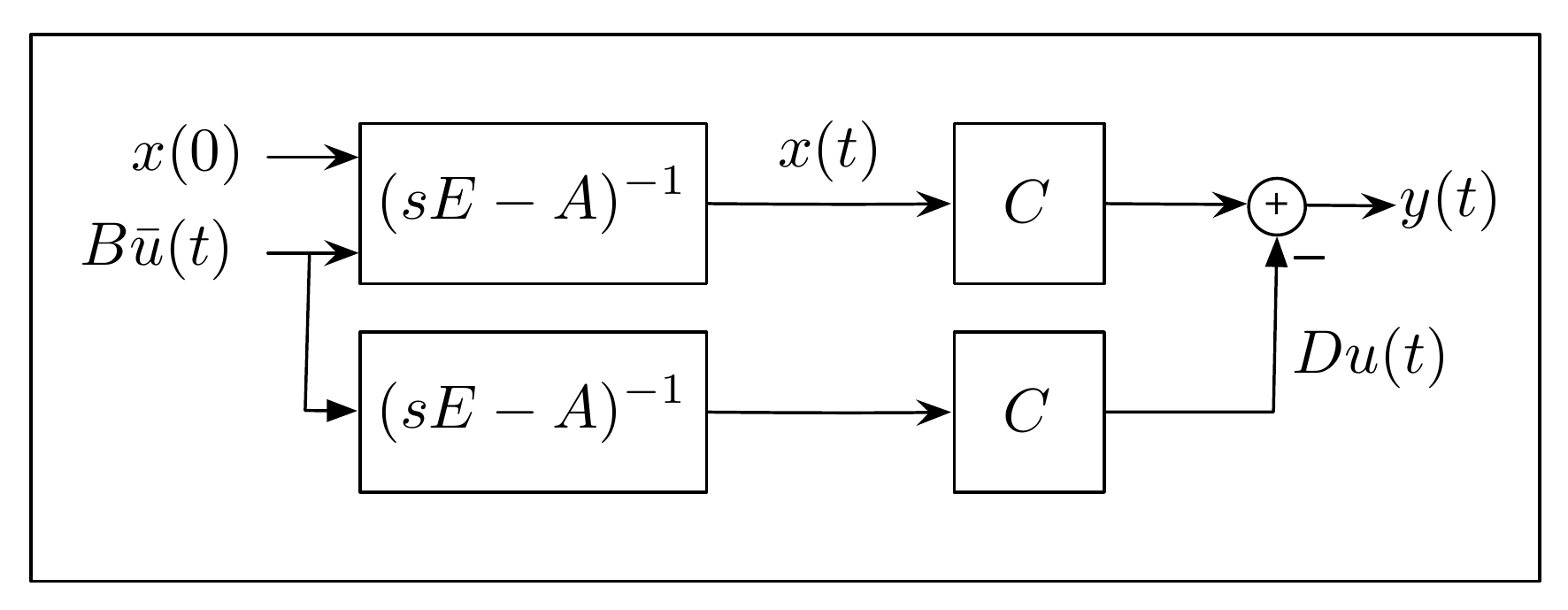}
      \label{covert_attack}
    } \!\!\!\!\!\!\subfigure[Dynamic false data injection]{
      \includegraphics[width=.485\columnwidth]{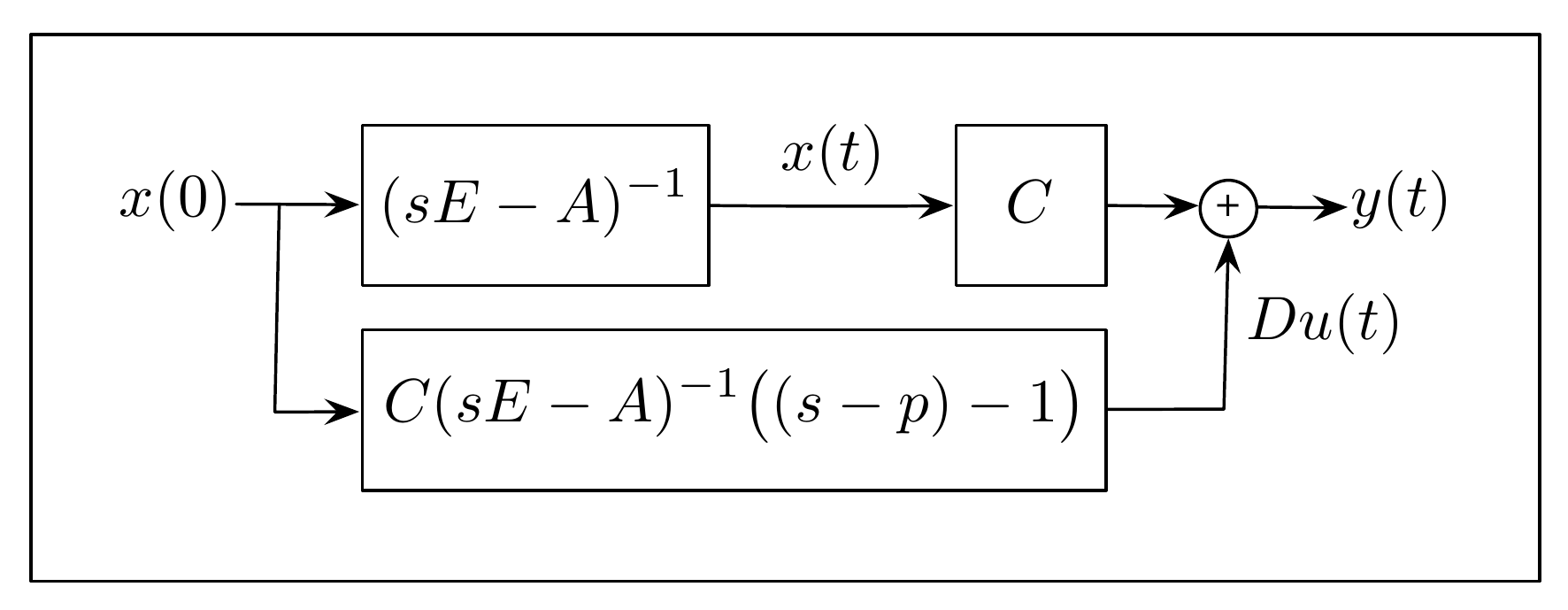}
      \label{dynamic_attack}
    } 
    \caption[Optional caption for list of figures]{ A block diagram
      illustration of prototypical attacks is here reported. In
      Fig. \ref{stealth_attack} the attacker corrupts the measurements
      $y(t)$ with the signal $D_K u_K (t) \in \Image (C)$. Notice that
      in this attack the dynamics of the system are not considered. In
      Fig. \ref{stealth_attack} the attacker affects the output so
      that $y(t) = y(x(0),[\bar u_K^\transpose \;
      u_K^\transpose]^\transpose,t) = y(\tilde x(0),0,t)$. The covert
      attack in Fig. \ref{covert_attack} is a feedback version of the
      replay attack, and it can be explained analogously. In
      Fig. \ref{dynamic_attack} the attack is such that the unstable
      pole $p$ is made unobservable.}
    \label{fig:prototipical_attacks}
  \end{figure}
  The following prototypical attacks can be modeled and analyzed
  through our theoretical framework:
  \begin{enumerate}
  \item \emph{stealth attacks} defined in \cite{DG-HS:10} correspond
    to output attacks compatible with the measurements equation;
  \item \emph{replay attacks} defined in \cite{YM-BS:10a} are state
    and output attacks which affect the system dynamics and reset the
    measurements;
  \item \emph{covert attacks} defined in \cite{RS:11} are closed-loop
    replay attacks, where the output attack is chosen to cancel out
    the effect on the measurements of the state attack; and
  \item \emph{(dynamic) false-data injection attacks} defined in
    \cite{YM-BS:10b} are output attacks rendering an unstable mode (if
    any) of the system unobservable.
  \end{enumerate}
  A possible implementation of the above attacks in our model is
  illustrated in Fig. \ref{fig:prototipical_attacks}.  \oprocend
\end{remark}

To conclude this section we remark that the examples presented in
Section \ref{sec:example_systems} are captured in our framework. In
particular, classical power networks failures modeled by additive
inputs include sudden change in the mechanical power input to
generators, lines outage, and sensors failure; see \cite{FP-FD-FB:11i}
for a detailed discussion. Analogously, for a water network, faults
modeled by additive inputs include leakages, variation in demand,
and failures of pumps and sensors. Possible cyber-physical attacks in both power and water networks 
include comprising measurements \cite{SA-XL-SS-AMB:10,AT-AS-HS-KHJ-SSS:10,YL-MKR-PN:09} 
and attacks on the control architecture or the physical state itself \cite{SS-AH-MG:12,CLD-JVS-FA:96,AHMR-ALG:11,JS-MM:07}.







\section{Limitations of static, dynamic and active monitors for
  detection and identification}\label{sec:static_dynamic}
The objective of this section is to highlight fundamental detection
and identification limitations of static, dynamic, and active
monitors. In particular, we show that the performance of widely used
static monitors can be greatly improved by exploiting the system
dynamics. On the other hand, the possibility of injecting monitoring
signals does not improve the detection capabilities of a (passive)
dynamic monitor.

Observe that a cyber-physical attack is undetectable if there exists a
normal operating condition of the system under which the output would
be the same as under the perturbation due to the attacker. Let
$y(x_0,u,t)$ be the output sequence generated from the initial state
$x_0$ under the attack signal $u(t)$.

\begin{lemma}{\bf \emph{(Undetectable
      attack)}}\label{undetectable_input}
  For the linear descriptor system \eqref{eq: cyber_physical_fault},
  the attack $(B_K u_K,D_K u_K)$ is undetectable by a static monitor
  if and only if $y(x_1,u_K,t) = y(x_2,0,t)$ for some initial
  condition $x_1, x_2 \in \real^n$ and for $t \in \mathbb{N}_0$. If
  the same holds for $t \in \real_{\ge 0}$, then the attack is also
  undetectable by a dynamic monitor.
\end{lemma}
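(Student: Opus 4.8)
The plan is to unwind the definitions of monitor, detection, and undetectability, reducing the statement to one elementary observation: the Boolean verdict $\psi_1$ returned by a monitor $\Phi$ is a deterministic function of the data set $\Lambda$ it is handed, and of nothing else. Consequently two scenarios that present a monitor with the \emph{same} $\Lambda$ force the \emph{same} verdict $\psi_1$; and since a meaningful monitor returns $\psi_1 = \textup{False}$ during nominal (attack-free) operation, this common verdict must be $\textup{False}$. Assumptions (A1)--(A3) enter only in the background: they guarantee that the output maps $t \mapsto y(x_0,u,t)$ appearing below exist and are smooth, so that comparing them pointwise is meaningful, and that a continuous-time identity of two outputs restricts trivially to the sampling instants $t \in \mathbb{N}_0$.

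I would first handle the static case. For the ``if'' direction, assume $y(x_1,u_K,t) = y(x_2,0,t)$ for all $t \in \mathbb{N}_0$ and some $x_1, x_2 \in \real^n$. A static monitor has $\gamma(t) \equiv 0$, hence never perturbs \eqref{eq: cyber_physical_fault}, and is handed $\Lambda = \{C,\, y(t)\ \forall t \in \mathbb{N}\}$; under the attack $u_K$ deployed from $x_1$ this data set coincides verbatim with the one produced by attack-free operation from $x_2$, so $\Phi$ outputs $\psi_1 = \textup{False}$ in both situations. As $\Phi$ ranges over all static monitors, the attack is undetectable. For the ``only if'' direction I would argue by contraposition: if no pair $x_1, x_2 \in \real^n$ satisfies $y(x_1,u_K,t) = y(x_2,0,t)$ for all $t \in \mathbb{N}_0$, then the static monitor that raises the alarm exactly when the received samples $\{y(t)\}_{t \in \mathbb{N}}$ are incompatible with every attack-free output of \eqref{eq: cyber_physical_fault} detects every deployment of the attack, so the attack is not undetectable by static monitors.

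I would then transfer the ``if'' direction --- the only one asserted for dynamic monitors --- to that class. Assume now $y(x_1,u_K,t) = y(x_2,0,t)$ for all $t \in \real_{\ge 0}$. A dynamic monitor again has $\gamma(t) \equiv 0$, so it does not alter \eqref{eq: cyber_physical_fault}; its input is $\Lambda = \{E,A,C,\, y(t)\ \forall t \in \real_{\ge 0}\}$, which augments the static input only by the matrices $E,A$ --- the same in either scenario --- and by the full continuous-time record of $y$. Since the attacked trajectory from $x_1$ and the attack-free trajectory from $x_2$ hand the monitor exactly the same $\Lambda$, every dynamic monitor returns $\psi_1 = \textup{False}$ on both, and the attack is undetectable by dynamic monitors. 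Knowing $E$ and $A$ is of no help here precisely because it is the same data in the two scenarios; the gap between static and dynamic undetectability is instead the passage from $\mathbb{N}_0$ to $\real_{\ge 0}$.

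The argument is essentially definition chasing, so the points where I would be most careful are: (i) the claim that a monitor's verdict depends on nothing beyond its declared input $\Lambda$, which is what forces indistinguishable scenarios to receive the same verdict; (ii) the implicit soundness convention that a monitor does not alarm under attack-free operation, without which the ``only if'' direction --- hence the equivalence --- would collapse, because a monitor returning $\textup{True}$ unconditionally would vacuously ``detect'' every attack; and (iii) checking that the detecting monitor invoked in the static converse indeed respects the restricted information set of a static monitor, namely $\{C,\, y(t)\ \forall t \in \mathbb{N}\}$ --- this, rather than the continuous-time analogue, is the delicate step.
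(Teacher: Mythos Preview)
Your proposal is correct and takes essentially the same approach as the paper, whose entire justification is the single sentence ``Lemma~\ref{undetectable_input} follows from the fact that our monitors are deterministic, so that $y(x_1,u_K,t)$ and $y(x_2,0,t)$ lead to the same output $\psi_1$.'' You unpack this determinism argument in full---treating both directions in the static case, the asserted direction in the dynamic case, and explicitly flagging the implicit soundness convention and the information-set constraint on the static converse---but the core idea is identical.
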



Lemma \ref{undetectable_input} follows from the fact that our monitors
are deterministic, so that $y(x_1,u_K,t)$ and $y(x_2,0,t)$ lead to the
same output $\psi_1$. A more general concern than detectability is
identifiability of attackers, that is, the possibility to distinguish
from measurements between the action of two distinct attacks. We
quantify the strength of an attack through the cardinality of the
attack set. Since an attacker can independently compromise any state
variable or measurement, every subset of the states and measurements
of fixed cardinality is a possible attack set.

\begin{lemma}{\bf \emph{(Unidentifiable
      attack)}}\label{unidentifiable_input}
  For the linear descriptor system \eqref{eq: cyber_physical_fault},
  the attack $(B_K u_K,D_K u_K)$ is unidentifiable by a static monitor
  if and only if $y(x_1,u_K,t) = y(x_2,u_R,t)$ for some initial
  condition $x_1, x_2 \in \real^n$, attack $(B_R u_R,D_R u_R)$ with
  $|R|\le|K|$ and $R\neq{K}$, and for $t \in \mathbb{N}_0$. If the
  same holds for $t \in \real_{\ge 0}$, then the attack is also
  unidentifiable by a dynamic monitor.
\end{lemma}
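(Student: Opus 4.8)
The plan is to mirror the reasoning behind Lemma~\ref{undetectable_input}, exploiting the fact that the monitors under consideration are deterministic maps from the available data to the output pair $(\psi_1,\psi_2)$. First I would fix the forward direction: suppose that $y(x_1,u_K,t)=y(x_2,u_R,t)$ for all $t$ in the relevant time set, with $|R|\le|K|$ and $R\neq K$. For a static monitor, the algorithm input is $\Lambda=\{C,\,y(t)\;\forall t\in\mathbb{N}\}$, so the monitor's output $\psi_2$ is a function of the output samples only. Since the two attacks $(B_K u_K,D_K u_K)$ and $(B_R u_R,D_R u_R)$ produce identical output sequences, any static monitor returns the same $\psi_2$ on both. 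By Definition~\ref{attack_identification}, identification of $(B_K u_K,D_K u_K)$ requires $\psi_2=K$, while for $(B_R u_R,D_R u_R)$ it would require $\psi_2=R\neq K$; no deterministic monitor can satisfy both, so the attack $K$ is unidentifiable. The same argument applies verbatim to a dynamic monitor once one observes that its extra knowledge $\{E,A\}$ is independent of which attack is acting and that its only time-varying input is again $y(t)$, now over $t\in\real_{\ge 0}$; hence if the output coincidence holds on $\real_{\ge 0}$, the dynamic monitor cannot distinguish the two attacks either.

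For the converse, I would argue contrapositively: suppose no such $x_2$, $u_R$, and $R$ exist. Then for every competing attack set $R$ with $|R|\le|K|$ and $R\neq K$, and every pair of consistent initial conditions, the output sequence $y(x_1,u_K,t)$ differs from every $y(x_2,u_R,t)$. One then needs to exhibit a monitor that exploits this separation. The natural construction is the ``universal'' decoder: given the observed output sequence, the monitor searches over all attack sets $R$ of cardinality at most $|K|$ and all consistent initial states for a pair $(x,u_R)$ reproducing the observed data, and outputs the (by hypothesis unique, up to the stated constraints) minimal such $R$. Because the data generated by $(B_K u_K,D_K u_K)$ is, by assumption, not reproducible by any strictly different $R$ with $|R|\le|K|$, this decoder correctly returns $\psi_2=K$. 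This monitor lies in the static class when the search uses only $\{C,y(t)\}$ and in the dynamic class when it additionally uses $\{E,A\}$ to constrain the admissible trajectories; in either case it is a legitimate member of the relevant class per Definitions~\ref{static_monitor}--\ref{dynamic_monitor}, since those definitions impose no computational or realizability restriction.

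I would also note one modeling subtlety to address cleanly: the monitor's data for a static monitor is sampled at $t\in\mathbb{N}$, whereas the lemma statement phrases the output coincidence over $t\in\mathbb{N}_0$; these should be reconciled, either by remarking that $t=0$ contributes $y(0)=Cx(0)+Du(0)$ and including it in the data without loss of generality, or by aligning the index sets. The analogous remark for the dynamic case is that $y(t)$ for $t\in\real_{\ge 0}$ is exactly the algorithm input $\Lambda$, so no reconciliation is needed there.

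The main obstacle I anticipate is not the logical skeleton, which is essentially the same deterministic-indistinguishability argument as Lemma~\ref{undetectable_input}, but rather making the cardinality bookkeeping precise. One must be careful that ``unidentifiable'' is defined as failure of identification by \emph{every} monitor in the class, and that the competing attack $R$ is constrained by $|R|\le|K|$ and $R\neq K$ but is otherwise arbitrary; in particular the converse must produce a single monitor that simultaneously defeats identification, which is immediate once the output sequences coincide, but the forward direction's universal decoder must be shown well-defined precisely when the coincidence fails for \emph{all} admissible $R$. A secondary point worth a sentence is that an undetectable attack is a fortiori unidentifiable (already observed in the text after Definition~\ref{attack_identification}), so the characterization here is genuinely about the finer notion; I would make sure the ``only if'' direction does not accidentally collapse to the detectability statement.
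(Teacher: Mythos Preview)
Your proposal is correct and follows the same core idea as the paper---deterministic monitors cannot distinguish identical output sequences---but you have written out far more than the paper does: the paper's entire justification is the single sentence ``Lemma~\ref{unidentifiable_input} follows analogously to Lemma~\ref{undetectable_input},'' where the latter is itself dispatched in one line by appealing to determinism. Your universal-decoder construction for the converse and your remarks on the $\mathbb{N}$ versus $\mathbb{N}_0$ indexing and the cardinality bookkeeping are all sound elaborations that the paper leaves implicit.
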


Lemma \ref{unidentifiable_input} follows analogously to Lemma
\ref{undetectable_input}. We now elaborate on the above lemmas to
derive fundamental detection and identification limitations for the
considered monitors.


\subsection{Fundamental limitations of static monitors}



Following Lemma \ref{undetectable_input}, an attack is undetectable by
a static monitor if and only if, for all $t \in \mathbb{N}_0$, there
exists a vector $\xi(t)$ such that $y(t) = C \xi(t)$. Notice that this
condition is compatible with \cite{YL-MKR-PN:09}, where an attack is
detected if and only if the residual $r(t) = y(t) - \tilde C \hat
x(t)$ is nonzero for some $t \in \mathbb N_0$, where $\hat x(t) =
C^{\dag} y(t)$.
In the following, let $\|v\|_0$ denote the number of nonzero
components of the vector $v$.

\begin{theorem}{\bf \emph{(Static detectability of cyber-physical attacks)}}
\label{Theorem: Static detectability of cyber-physical attacks}
For the cyber-physical descriptor system \eqref{eq:
  cyber_physical_fault} and an attack set $K$, the following
statements are equivalent: \begin{enumerate}
  \item the attack set $K$ is undetectable by a static monitor;
  \item there exists an attack mode $u_K (t)$ satisfying, for some $x
    (t)$ and at every $t \in \mathbb N_0$,
    \begin{align}\label{eq:cond2_static}
      C x(t) + D_K u_K(t) = 0.
    \end{align}
\end{enumerate}
Moreover, there exists an attack set $K$, with $|K| = k \in
\mathbb{N}_0$, undetectable by a static monitor if and only if there
exist $x \in \real^n$ such that $\| C x \|_0 =k$.
\end{theorem}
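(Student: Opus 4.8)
The plan is to prove the theorem in two parts. For the equivalence of (i) and (ii), I would start from the characterization already recorded just before the statement: by Lemma~\ref{undetectable_input}, the attack set $K$ is undetectable by a static monitor exactly when there exist attack modes $u_K(t)$ and state trajectories $x_1(t)$, $x_2(t)$ such that $y(x_1,u_K,t) = y(x_2,0,t)$ for all $t \in \mathbb{N}_0$. Writing out the output equation $y(t) = Cx(t) + D_K u_K(t)$ and using linearity, this is equivalent to $C(x_1(t) - x_2(t)) + D_K u_K(t) = 0$ for all $t \in \mathbb{N}_0$. Setting $x(t) := x_1(t) - x_2(t)$ immediately gives \eqref{eq:cond2_static}, so (i) $\Rightarrow$ (ii). For the converse, given $x(t)$ and $u_K(t)$ satisfying \eqref{eq:cond2_static}, I would take $x_1(t) = x(t)$, $x_2(t) = 0$, and note that since $D = [\,0\ I\,]$ acts only on the output, the output attack component of $u_K$ is free and can be chosen to realize \eqref{eq:cond2_static} regardless of whether $x(t)$ solves the descriptor dynamics; the key point is that $C\xi(t)$ for an arbitrary $\xi(t)$ is a valid ``nominal'' output of the static model (which, per Definition~\ref{static_monitor}, only knows $C$ and the samples $y(t)$), so the two outputs are indistinguishable. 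This is essentially the remark preceding the theorem, made precise.

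For the ``moreover'' claim, I would argue that an undetectable attack set $K$ with $|K|=k$ exists if and only if $\|Cx\|_0 = k$ for some $x \in \real^n$. For the ``if'' direction: given $x$ with $\|Cx\|_0 = k$, let $K$ index the $k$ nonzero rows of $Cx$ among the output coordinates (thinking of $K$ as a subset of the output labels in $\until{n+p}$), and define the static output attack by $D_K u_K := -Cx$, which is supported exactly on $K$; choosing $x(t) \equiv x$ and $u_K(t)$ constant accordingly satisfies \eqref{eq:cond2_static} with an attack set of cardinality exactly $k$, so $K$ is undetectable by part~(i)--(ii). For the ``only if'' direction: if $K$ with $|K|=k$ is undetectable, then \eqref{eq:cond2_static} holds, so at any fixed time $D_K u_K(t) = -Cx(t)$; since $D_K$ has columns that are distinct standard basis vectors (the output-attack columns of $D = [\,0\ I\,]$), the vector $D_K u_K(t)$ has support contained in the output indices of $K$, and to actually use all $k$ attack channels one needs the attack mode nonzero in every coordinate of $K$, forcing $\|Cx(t)\|_0$ to be at least the number of output-type indices in $K$; a short argument handles state-type indices in $K$ separately (a state attack $u_x$ does not appear in the static output equation at all, so for a static monitor only output-type attack indices matter, and one reduces to the case $K$ consists of output indices). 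This yields the existence of $x$ with $\|Cx\|_0 = k$.

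The main obstacle I anticipate is the bookkeeping in the ``moreover'' part: making precise that $K \subseteq \until{n+p}$ mixes state-attack and output-attack indices, that a static monitor is blind to the state-attack part (so effectively $K$ may be assumed to consist of output indices, matching the first equivalence which already says static monitors cannot detect dynamics-only attacks), and that ``the attack set $K$ is undetectable'' means \emph{some} attack supported on exactly $K$ is undetectable, so one needs the attack mode to be nonzero in \emph{every} coordinate of $K$ — hence the exact equality $\|Cx\|_0 = k$ rather than an inequality. I would be careful to state the correspondence between nonzero rows of $Cx$ and admissible attack sets cleanly, perhaps noting that $D_K u_K \in \Image(C)$ with $\|D_K u_K\|_0 = k$ is the geometric reformulation, and that the whole argument is pointwise in $t$, which is why sampling at $t \in \mathbb{N}_0$ suffices and no dynamics enter.
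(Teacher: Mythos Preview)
Your proposal is correct and close in spirit to the paper's proof, but the technical route differs in one place worth noting. For the equivalence (i)$\Leftrightarrow$(ii) you argue by subtracting trajectories via linearity and Lemma~\ref{undetectable_input}, obtaining $C(x_1(t)-x_2(t)) + D_K u_K(t)=0$ directly. The paper instead works through the \emph{residual} of the bad data detector: it writes $r(t) = (I - CC^{\dagger})y(t) = (I - CC^{\dagger})D_K u_K(t)$ and observes that $r(t)=0$ iff $D_K u_K(t) \in \Image(C)$, which is the condition \eqref{eq:cond2_static}. Your argument is slightly more elementary (no pseudoinverse), while the paper's is more operational---it ties undetectability to the concrete statistic a static monitor actually computes. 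Both are short and equivalent.

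For the ``moreover'' clause, the paper dispatches it in one sentence: it follows from (ii) and the fact that $D=[0\; I]$ lets each output coordinate be attacked independently. Your bookkeeping---separating state-type from output-type indices in $K$, noting that static monitors are blind to the former, and arguing that the exact equality $\|Cx\|_0=k$ (not $\le k$) is forced because every coordinate of the attack mode must be nonzero---is more explicit and arguably clearer, but is exactly the unpacking the paper leaves to the reader. One small cleanup: in your (ii)$\Rightarrow$(i) direction you initially invoke Lemma~\ref{undetectable_input} with ``$x_1(t)=x(t)$, $x_2(t)=0$,'' which would require $x(t)$ to be an actual trajectory; you then (correctly) fall back on the weaker characterization stated just before the theorem, namely that a static monitor accepts any $y(t)\in\Image(C)$. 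Lead with that characterization from the start and the argument is clean.
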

\smallskip

Before presenting a proof of the above theorem, we highlight that a
necessary and sufficient condition for the equation
\eqref{eq:cond2_static} to be satisfied is that $D_K u_{K}(t) =
u_{y,K}(t) \in \Image (C)$ at all times $t \in \mathbb N_0$, where
$u_{y,K}(t)$ is the vector of the last $p$ components of
$u_K(t)$. Hence, statement (ii) in Theorem \ref{Theorem: Static
  detectability of cyber-physical attacks} implies that {\em no} state
attack can be detected by a static detection procedure, and that an
undetectable output attack exists if and only if $\Image (D_{K}) \cap
\Image (C) \neq \{ 0 \}$.

\begin{pfof}{Theorem \ref{Theorem: Static detectability of
      cyber-physical attacks}}
  As previously discussed, the attack $K$ is undetectable by a static
  monitor if and only if for each $t \in \mathbb N$ there exists
  $x(t)$, and $u_{K}(t)$ such that
  \begin{align*}
    r(t) = y(t) - CC^{\dagger}
    y(t) = (I- C C^{\dagger}) \left( C x(t)
    + D_K u_K(t) \right)
  \end{align*}
  vanishes. Consequently, $r(t) = (I - C C^{\dagger}) D_K u_K(t)$, and
  the attack set $K$ is undetectable if and only if $D_K u_{K}(t) \in
  \Image(C)$, which is equivalent to statement (ii). The last
  necessary and sufficient condition in the theorem follows from (ii),
  and the fact that every output variable can be attacked
  independently of each other since $D = \begin{bmatrix}0 ,
    I\end{bmatrix}$.
\end{pfof}

We now focus on the static identification problem. Following Lemma
\ref{unidentifiable_input}, the following result can be asserted.

\begin{theorem}{\bf \emph{(Static identification of cyber-physical attacks)}}
  \label{Theorem: Static identifiability of cyber-physical attacks}
  For the cyber-physical descriptor system \eqref{eq:
    cyber_physical_fault} and an attack set $K$, the following
  statements are equivalent:
  \begin{enumerate}
  \item the attack set $K$ is unidentifiable by a static monitor;
  \item there exists an attack set $R$, with $|R|\le|K|$ and
    $R\neq{K}$, and attack modes $u_K (t)$, $u_R (t)$ satisfying, for
    some $x (t)$ and at every $t \in \mathbb N_0$,
    \begin{align*}
      C x(t) + D_K \left( u_K(t) + u_R(t) \right)= 0.
    \end{align*}
  \end{enumerate}
  Moreover, there exists an attack set $K$, with $|K| = k \in
  \mathbb{N}_0$, unidentifiable by a static monitor if and only if
  there exists an attack set $\bar K$, with $|\bar K| \le 2k$, which
  is undetectable by a static monitor.
\end{theorem}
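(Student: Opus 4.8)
The plan is to translate both sides of the claimed equivalence into purely linear-algebraic statements about output-attack vectors lying in $\Image(C)$, and then pass between them by a simple combine/split operation. Two facts are already available: Theorem~\ref{Theorem: Static detectability of cyber-physical attacks} (with the remark following it) says that an attack set $\bar K$ is undetectable by a static monitor precisely when some nonzero attack mode $u_{\bar K}(t)$ satisfies $D_{\bar K} u_{\bar K}(t) \in \Image(C)$ for all $t \in \mathbb{N}_0$; and the equivalence (i)$\Leftrightarrow$(ii) already proved in Theorem~\ref{Theorem: Static identifiability of cyber-physical attacks} says that an attack set $K$ is unidentifiable by a static monitor precisely when there exist $R$ with $|R| \le |K|$, $R \neq K$, and attack modes $u_K(t)$, $u_R(t)$ with $D_K u_K(t) + D_R u_R(t) \in \Image(C)$ for all $t \in \mathbb{N}_0$. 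Since $D = \begin{bmatrix} 0 , I \end{bmatrix}$ acts coordinatewise on the output entries, all of these conditions concern vectors in $\real^p$ whose support lies in the output part of the attack set at hand. Throughout, the case $k = 0$ is trivial, so we take $k \ge 1$.

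For the ``only if'' direction, given an unidentifiable $K$ with $|K| = k$, I would take the associated $R$, $u_K$, $u_R$ and form a single attack mode $u_{\bar K}$ on $\bar K := K \cup R$ that equals $u_K$ on $K \setminus R$, equals $u_R$ on $R \setminus K$, and equals the pointwise sum $u_{K,i} + u_{R,i}$ on $K \cap R$. Then $D_{\bar K} u_{\bar K}(t) = D_K u_K(t) + D_R u_R(t) \in \Image(C)$ for all $t \in \mathbb{N}_0$, the cardinality bound $|\bar K| = |K \cup R| \le |K| + |R| \le 2k$ holds, and $u_{\bar K}$ is not identically zero because $K \neq R$ forces $K \setminus R$ or $R \setminus K$ to be nonempty, where $u_{\bar K}$ coincides with a genuine attack mode. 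Restricting $\bar K$ to the indices on which $u_{\bar K}$ is not identically zero (to keep it a legitimate attack set), Theorem~\ref{Theorem: Static detectability of cyber-physical attacks} certifies that this set, of cardinality at most $2k$, is undetectable.

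For the ``if'' direction, given an undetectable $\bar K$ with $|\bar K| \le 2k$ and a nonzero mode $u_{\bar K}$ with $D_{\bar K} u_{\bar K}(t) \in \Image(C)$, I would first treat the case $|\bar K| \ge k$: choose $A_1 \subseteq \bar K$ with $|A_1| = k$, set $A_2 := \bar K \setminus A_1$ (so $|A_2| \le k$), and put $K := A_1$, $R := A_2$, $u_K := u_{\bar K}|_{A_1}$, $u_R := u_{\bar K}|_{A_2}$; then $D_K u_K(t) + D_R u_R(t) = D_{\bar K} u_{\bar K}(t) \in \Image(C)$, while $|R| \le k$ and $R \neq K$ (they are disjoint and not both empty since $k \ge 1$), so statement (ii) of Theorem~\ref{Theorem: Static identifiability of cyber-physical attacks} holds and $K$ is unidentifiable with $|K| = k$. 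When $|\bar K| < k$, I would instead observe that an undetectable set is a fortiori unidentifiable, and invoke monotonicity of unidentifiability under enlargement of the attack set: an index added to $K$ can be mirrored, with opposite sign, as an index of $R$ (or simply added if it is a state index, since $D$ annihilates it), leaving the combined output vector $D_K u_K + D_R u_R$ unchanged and preserving $|R| \le |K|$ and $R \neq K$; this upgrades the witness $\bar K$ of size $< k$ to one of size exactly $k$.

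The only genuine care needed is in the bookkeeping, and I expect that to be the main (though not deep) obstacle: keeping state and output indices separate, since only the output coordinates of $K$ and $R$ enter $D_K u_K$ and $D_R u_R$, so that the ``$\in \Image(C)$'' conditions really live in $\real^p$ with support controlled by the attack set; ensuring that the combined mode $u_{\bar K}$ constructed in the ``only if'' direction is genuinely nonzero and that the set it induces is a legitimate attack set of cardinality at most $2k$ (possible cancellation on $K \cap R$ is handled by passing to the actual support); and matching the exact cardinality $k$ on the identifiability side, which is why the split above is arranged to make $|A_1| = k$ when $|\bar K| \ge k$ and why monotonicity is used otherwise. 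Beyond this, nothing dynamical enters the static problem, and the whole statement reduces to elementary manipulations of $\Image(C)$ together with the two preceding theorems.
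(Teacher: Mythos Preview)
Your proposal is correct and follows the same route as the paper: the equivalence (i)$\Leftrightarrow$(ii) is obtained from Lemma~\ref{unidentifiable_input} and linearity, and the ``Moreover'' statement is reduced to Theorem~\ref{Theorem: Static detectability of cyber-physical attacks} via the combine/split maneuver you describe. The paper's proof of the last statement is a single sentence (``follows from Theorem~\ref{Theorem: Static detectability of cyber-physical attacks}''), and your proposal simply spells out the bookkeeping that sentence suppresses --- including the support/nonzeroness checks and the padding argument for the case $|\bar K| < k$, which are handled correctly.
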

\smallskip

Similar to the fundamental limitations of static detectability in
Theorem \ref{Theorem: Static detectability of cyber-physical attacks},
Theorem \ref{Theorem: Static identifiability of cyber-physical
  attacks} implies that, for instance, state attacks cannot be
identified and that an undetectable output attack of cardinality $k$
exists if and only if $\Image (D_{\bar K}) \cap \Image (C) \neq \{ 0
\}$, for some attack set $\bar K$ with $|\bar K| \le 2k$.
 
\begin{pfof}{Theorem \ref{Theorem: Static identifiability of cyber-physical attacks}}
  Due to linearity of the system \eqref{eq:
    cyber_physical_fault}, the unidentifiability condition in
  Lemma \ref{unidentifiable_input} is equivalent to
  $y(x_K-x_R,u_{K}-u_{R},t) = 0$, for some initial conditions $x_K$,
  $x_R$, and attack modes $u_K (t)$, $u_R(t)$. The equivalence between
  statements (i) and (ii) follows. The last statement follows from
  Theorem \ref{Theorem: Static detectability of cyber-physical
    attacks}.
\end{pfof}

\subsection{Fundamental limitations of dynamic
  monitors}\label{sec:limitations_dynamic}
As opposed to a static monitor, a dynamic monitor checks for the
presence of attacks at every time $t \in \real_{\geq 0}$.
Intuitively, a dynamic monitor is harder to mislead than a static
monitor. The following theorem formalizes this expected result.


\begin{theorem}{\bf \emph{(Dynamic detectability of cyber-physical
      attacks)}}\label{Theorem: Dynamic detectability of cyber-physical attacks}
  For the cyber-physical descriptor system \eqref{eq:
    cyber_physical_fault} and an attack set $K$, the following
  statements are equivalent:
  \begin{enumerate}
  \item the attack set $K$ is undetectable by a dynamic monitor;
  \item there exists an attack mode $u_K (t)$ satisfying, for some $x
    (0)$ and for every $t \in \real_{\ge 0}$,
    \begin{align*}
      E \dot x (t) &= Ax(t) + B_K u_K(t) \,,\\
      0 &= C x(t) + D_K u_K (t) \,;
    \end{align*}
\item there exist $s \in \complex$, $g \in \real^{|K|}$, and $x \in
  \real^{n}$, with $x \neq 0$, such that $(s E - A)x - B_K
  g = 0$ and $C x + D_K g = 0$.
  \end{enumerate}

  \noindent
  Moreover, there exists an attack set $K$, with $|K| = k$,
  undetectable by a dynamic monitor if and only if there exist $s \in
  \complex$ and $x \in \real^{n}$ such that $\|(sE - A) x \|_0 + \|C
  x\|_0 = k$.
\end{theorem}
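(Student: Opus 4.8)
The plan is to read off the last claim from the already-established equivalence between statements (i) and (iii), exploiting the special structure $B=[I\ 0]$, $D=[0\ I]$. Write $K=K_x\sqcup K_y$ with $K_x=K\cap\until{n}$ the state-attack indices and $K_y=K\cap\fromto{n+1}{n+p}$ the output-attack indices. The one elementary fact that does all the work is this: for any vector $g$ indexed by $K$ with all entries nonzero, $B_Kg\in\real^n$ has support exactly $K_x$ and $D_Kg\in\real^p$ has support exactly $\{i-n:i\in K_y\}$, so $\|B_Kg\|_0=|K_x|$ and $\|D_Kg\|_0=|K_y|$. I would also dispatch at the outset the harmless remark that, although $s$ is allowed complex, one may take it real (if $Ex\neq 0$ is forced, $s$ can be chosen real; if $Ex=0$, $s$ is irrelevant), so that $g$ and all supports are read off over $\real$.

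For sufficiency, suppose $s\in\complex$ and $x\in\real^n$ satisfy $\|(sE-A)x\|_0+\|Cx\|_0=k$. The case $k=0$ is the empty attack set and is trivial, so assume $k\ge 1$, which forces $x\neq 0$. Put $K_x:=\operatorname{supp}((sE-A)x)$, $K_y:=\{i+n:(Cx)_i\neq 0\}$, and $K:=K_x\cup K_y$, so $|K|=\|(sE-A)x\|_0+\|Cx\|_0=k$. Define $g$ indexed by $K$ by $g_i=((sE-A)x)_i$ for $i\in K_x$ and $g_i=-(Cx)_{i-n}$ for $i\in K_y$; every entry of $g$ is nonzero, $B_Kg=(sE-A)x$, and $D_Kg=-Cx$. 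Hence $(sE-A)x-B_Kg=0$ and $Cx+D_Kg=0$ with $x\neq 0$, which is exactly statement (iii); by (iii)$\Rightarrow$(i), $K$ is undetectable by a dynamic monitor and $|K|=k$.

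For necessity, suppose $K$ with $|K|=k$ is an undetectable attack set (again $k=0$ is trivial). By the modeling convention an attack with attack set $K$ engages all $|K|$ of its components, so the relevant witness has no zero entries; feeding this into (i)$\Leftrightarrow$(iii) yields $s\in\complex$, $x\neq 0$, and $g$ with all entries nonzero such that $(sE-A)x=B_Kg$ and $Cx=-D_Kg$. By the support identity of the first paragraph, $\|(sE-A)x\|_0=\|B_Kg\|_0=|K_x|$ and $\|Cx\|_0=\|D_Kg\|_0=|K_y|$, whence $\|(sE-A)x\|_0+\|Cx\|_0=|K_x|+|K_y|=|K|=k$.

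I expect the necessity direction to be the subtle part. A priori one only gets the containments $\operatorname{supp}((sE-A)x)\subseteq K_x$ and $\operatorname{supp}(Cx)+n\subseteq K_y$, hence merely the inequality $\|(sE-A)x\|_0+\|Cx\|_0\le k$; what upgrades this to equality is precisely that a genuine attack set $K$ has all $|K|$ components active, so the witness $g$ has no zero entries and the two supports are exactly $K_x$ and $K_y-n$. The point requiring care is therefore to verify that an undetectable attack admits such a fully-supported, single-exponential witness of the form $(s,g,x)$ — equivalently, that every undetectable attack set trims down to one realized exactly by a pair $(s,x)$ in the stated form — and that the convention in the definition of an attack set is used consistently throughout.
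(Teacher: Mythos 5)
Your proposal addresses only the final ``Moreover'' claim and explicitly takes the equivalence of statements (i) and (iii) as already established; but that equivalence (together with (ii)) is the core content of the theorem and is precisely what its proof must supply. The paper's argument has two steps you omit entirely. First, (i)$\Leftrightarrow$(ii) follows from Lemma~\ref{undetectable_input} together with linearity: undetectability by a dynamic monitor means $y(x_1,u_K,t)=y(x_2,0,t)$ for some initial conditions and all $t\in\real_{\ge 0}$, and subtracting gives $y(x_1-x_2,u_K,t)\equiv 0$, i.e., the attack mode together with some initial condition excites only the zero dynamics, which is statement (ii). Second, and less trivially, (ii)$\Leftrightarrow$(iii) is the equivalence between the existence of nontrivial zero dynamics and the existence of an invariant zero of the pencil $\left[\begin{smallmatrix} sE-A & -B_K\\ C & D_K\end{smallmatrix}\right]$; for regular descriptor systems with consistent initial conditions and smooth inputs this is the result of \cite{TG:93} invoked by the paper. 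This passage from an arbitrary smooth output-nulling input to a single-exponential witness $(s,g,x)$ cannot simply be ``read off,'' and it is also exactly the fact your necessity argument for the cardinality claim silently relies on when it asks for ``a fully-supported, single-exponential witness.''

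The portion you do prove is handled at least as carefully as in the paper, which dismisses the last claim in one line, and you correctly isolate the genuine subtlety: a priori one only gets $\|(sE-A)x\|_0+\|Cx\|_0=\|g\|_0\le k$, and equality needs $g$ to have no zero entries. However, your resolution does not close this: the convention that all entries of $u_K(t)$ are nonzero over time constrains the attack \emph{signal}, whereas the static direction $g$ produced by the zero-dynamics/invariant-zeros equivalence is a different object and may have zero entries even when every component of $u_K(t)$ is not identically zero (e.g., when the output is nulled by superposing exponentials at two different invariant zeros, each engaging a different component). If $\|g\|_0<k$ one only concludes that some strictly smaller attack set is undetectable. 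So even the part you prove retains a gap in the necessity direction --- one the paper itself glosses over --- but the principal missing content is the proof of (i)$\Leftrightarrow$(ii)$\Leftrightarrow$(iii).
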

\smallskip

Before proving Theorem \ref{Theorem: Dynamic detectability of
  cyber-physical attacks}, some comments are in order. First,
differently from the static case, state attacks {\em can be detected}
in the dynamic case. Second, in order to mislead a dynamic monitor an
attacker needs to inject a signal which is consistent with the system
dynamics at every instant of time. Hence, as opposed to the static
case, the condition $D_K u_{K}(t) = u_{y,K}(t) \in \Image (C)$ needs
to be satisfied for every $t \in \real_{\ge 0}$, and it is only
necessary for the undetectability of an output attack. Indeed, for
instance, state attacks can be detected even though they automatically
satisfy the condition $D_K u_{K}(t) = 0 \in \Image (C)$. Third and
finally, according to the last statement of Theorem \ref{Theorem:
  Dynamic detectability of cyber-physical attacks}, the existence of
invariant zeros\footnote{For the system $(E,A,B_K,C,D_K)$, the value
  $s \in \complex$ is an invariant zero if there exists $x \in
  \real^{n}$, with $x \neq 0$, $g \in \real^{|K|}$, such that $(sE-
  A)x - B_K g = 0$ and $C x + D_K g = 0$.}
for the system $(E,A,B_K, C, D_K)$ is equivalent to the existence of
undetectable attacks. As a consequence, a dynamic monitor performs
better than a static monitor, while requiring, possibly, fewer
measurements. We refer to Section \ref{sec:example_2} for an
illustrative example of this last statement.

\begin{pfof}{Theorem \ref{Theorem: Dynamic detectability of
      cyber-physical attacks}}
  By Lemma \ref{undetectable_input} and linearity of the system
  \eqref{eq: Kron-reduced model}, the attack mode
  $u_K(t)$ is undetectable by a dynamic monitor if and only if there
  exists $x_0$ such that $y(x_0,u_K,t) = 0$ for all $t \in \mathbb
  R_{\geq 0}$, that is, if and only if the system \eqref{eq:
    cyber_physical_fault} features zero dynamics. Hence, statements
  (i) and (ii) are equivalent.  For a linear descriptor system with
  smooth input and consistent initial condition, the existence of zero
  dynamics is equivalent to the existence of invariant zeros
  \cite[Theorem 3.2 and Proposition 3.4]{TG:93}. The equivalence of
  statements (ii) and (iii) follows.
  The last statement follows from (iii), and the fact that $B
  = \begin{bmatrix}I , 0\end{bmatrix}$ and $D = \begin{bmatrix}0 ,
    I\end{bmatrix}$.
\end{pfof}

We now consider the identification problem.

\begin{theorem}
  {\bf \emph{(Dynamic identifiability of cyber-physical attacks)}}
  \label{Theorem: Dynamic identifiability of cyber-physical attacks}
  For the cyber-physical descriptor system \eqref{eq:
    cyber_physical_fault} and an attack set $K$, the following
  statements are equivalent:
  \begin{enumerate}
  \item the attack set $K$ is unidentifiable by a dynamic monitor;
  \item there exists an attack set $R$, with $|R| \le |K|$ and
    $R\neq{K}$, and attack modes $u_K(t)$, $u_R(t)$ satisfying, for
    some $x (0)$ and for every $t \in \real_{\ge 0}$,
    \begin{align*}
      E \dot x (t) &= Ax(t) + B_K u_K(t) + B_R u_R(t) \,,\\
      0 &= C x(t) + D_K u_K (t) + D_R u_R (t) \,;
    \end{align*}
\item there exists an attack set $R$, with $|R| \le |K|$ and
  $R\neq{K}$, $s \in \mathbb{C}$, $g_K \in \mathbb{R}^{|K|}$, $g_R \in
  \mathbb{R}^{|R|}$, and $x \in \mathbb{R}^{n}$, with $x \neq 0$,
  such that $(sE - A) x - B_K g_K - B_R g_R = 0$
  and $C x + D_K g_K + D_R g_R = 0$.
  \end{enumerate}
  Moreover, there exists an attack set $K$, with $|K| = k \in
  \mathbb{N}_0$, unidentifiable by a dynamic monitor if and only if
  there exists an attack set $\bar K$, with $|\bar K| \le 2 k$, which
  is undetectable by a dynamic monitor.
\end{theorem}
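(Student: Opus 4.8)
The plan is to read off both directions of the equivalence directly from the algebraic characterizations already available, namely statement~(iii) of Theorem~\ref{Theorem: Dynamic detectability of cyber-physical attacks} and statement~(iii) of the present theorem. No new time-domain analysis is needed; the whole argument is a combinatorial manipulation of the attack index sets, moving a single input signature back and forth between a set and a splitting of it.

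For the ``only if'' direction I would start from an attack set $K$ with $|K| = k$ that is unidentifiable by a dynamic monitor, and invoke the equivalence (i)$\,\Leftrightarrow\,$(iii) of this theorem to obtain an attack set $R$ with $|R| \le |K|$ and $R \neq K$, together with $s \in \complex$, $g_K \in \real^{|K|}$, $g_R \in \real^{|R|}$, and $x \in \real^{n}\setminus\{0\}$ such that $(sE-A)x - B_K g_K - B_R g_R = 0$ and $Cx + D_K g_K + D_R g_R = 0$. The key step is to collapse the two input signatures into one on $\bar K := K \cup R$: the vector $\bar g \in \real^{|\bar K|}$ defined by $\bar g_j = (g_K)_j$ for $j \in K\setminus R$, $\bar g_j = (g_R)_j$ for $j \in R\setminus K$, and $\bar g_j = (g_K)_j + (g_R)_j$ for $j \in K\cap R$ satisfies $B_K g_K + B_R g_R = B_{\bar K}\bar g$ and $D_K g_K + D_R g_R = D_{\bar K}\bar g$, since $B_{\bar K}$ and $D_{\bar K}$ collect exactly the columns of $B$ and $D$ indexed by $\bar K$. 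Thus $(sE-A)x - B_{\bar K}\bar g = 0$ and $Cx + D_{\bar K}\bar g = 0$ with $x \neq 0$, which is precisely statement~(iii) of Theorem~\ref{Theorem: Dynamic detectability of cyber-physical attacks} for $\bar K$; as $|\bar K| \le |K| + |R| \le 2k$, this exhibits an undetectable attack set of cardinality at most $2k$.

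For the ``if'' direction I would take an undetectable attack set $\bar K$ with $|\bar K| \le 2k$ and split it into disjoint sets $K'$ and $R$ of cardinalities $\lceil |\bar K|/2\rceil \le k$ and $\lfloor|\bar K|/2\rfloor$, so that $|R| \le |K'|$ and $R \neq K'$. Applying statement~(iii) of Theorem~\ref{Theorem: Dynamic detectability of cyber-physical attacks} to $\bar K$ yields $s \in \complex$, $g \in \real^{|\bar K|}$, and $x \in \real^{n}\setminus\{0\}$ with $(sE-A)x - B_{\bar K}g = 0$ and $Cx + D_{\bar K}g = 0$; decomposing $g = (g_{K'}, g_R)$ along the partition gives $(sE-A)x - B_{K'}g_{K'} - B_R g_R = 0$ and $Cx + D_{K'}g_{K'} + D_R g_R = 0$, which is statement~(iii) of the present theorem for $K'$, so $K'$ is unidentifiable. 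It remains to reach cardinality exactly $k$: if $|K'| < k$, enlarge $K'$ to a set $K \supseteq K'$ with $|K| = k$ (possible because $k \le n+p$) and pad the witnessing data by zeros on $K\setminus K'$; the resulting $K$ is still unidentifiable.

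The main obstacle, modest as it is, lies in the index bookkeeping: one must check that the merge in the ``only if'' part genuinely reproduces $B_K g_K + B_R g_R$ and $D_K g_K + D_R g_R$ — which is immediate once one notes that $\bar g$ is defined purely in terms of the indices and hence works simultaneously for $B$ and for $D$ — and that the split-and-pad in the ``if'' part respects every requirement of statement~(iii), in particular $|R| \le |K|$, $R \neq K$, and $|K| = k$. The degenerate subcase $R = \emptyset$, which arises when $|\bar K| = 1$, is harmless: statement~(iii) then forces $K'$ to be undetectable, and an undetectable attack set is a fortiori unidentifiable. Beyond this there is no conceptual difficulty.
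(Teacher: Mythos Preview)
Your argument for the ``Moreover'' clause is correct and is exactly the combinatorial unpacking the paper leaves implicit: the paper's own proof of that clause is the single sentence ``the last two statements follow from Theorem~\ref{Theorem: Dynamic detectability of cyber-physical attacks}, and the fact that $B = [I,0]$ and $D = [0,I]$,'' and your merge/split/pad manipulation of the index sets $K$, $R$, $\bar K$ is precisely what that sentence hides. So on this part you and the paper take the same route, yours being the fully spelled-out version.

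There is, however, a structural gap. Your proposal treats the equivalence (i)$\Leftrightarrow$(iii) of the \emph{present} theorem as ``already available'' and invokes it in the very first step of the ``only if'' direction. Since that equivalence is part of the statement you are asked to prove, this is circular unless you supply it separately. The paper does supply it, in two short moves: (i)$\Leftrightarrow$(ii) comes from linearity of \eqref{eq: cyber_physical_fault} together with Lemma~\ref{unidentifiable_input}, since $y(x_1,u_K,t)=y(x_2,u_R,t)$ is equivalent to $y(x_1-x_2,\,u_K-u_R,\,t)=0$, which is exactly statement~(ii) with $x(0)=x_1-x_2$; and (ii)$\Leftrightarrow$(iii) is inherited from the corresponding equivalence in Theorem~\ref{Theorem: Dynamic detectability of cyber-physical attacks} applied to the combined signature $(B_{K\cup R},D_{K\cup R})$, after which the resulting $g$ is split back into $g_K$ and $g_R$. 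Add those two lines at the start and your proof is complete and matches the paper's.
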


\begin{pf}
  Notice that, because of the linearity of the system \eqref{eq:
    cyber_physical_fault}, the unidentifiability condition in
  Lemma \ref{unidentifiable_input} is equivalent to the condition
  $y(x_K-x_R,u_{K}-u_{R},t) = 0$, for some initial conditions $x_K$,
  $x_R$, and attack modes $u_K (t)$, $u_R(t)$. The equivalence between
  statements (i) and (ii) follows. Finally, the last two statements
  follow from Theorem \ref{Theorem: Dynamic detectability of
    cyber-physical attacks}, and the fact that $B = \begin{bmatrix}I ,
    0\end{bmatrix}$ and $D = \begin{bmatrix}0 , I\end{bmatrix}$.
\end{pf}

In other words, the existence of an unidentifiable attack set $K$ of
cardinality $k$ is equivalent to the existence of invariant zeros for
the system $(E,A,B_{\bar K},C,D_{\bar K})$, for some attack set $\bar
K$ with $|\bar K| \le 2k$. We conclude this section with the following
remarks. The existence condition in Theorem 3.4 is hard to verify
because of its combinatorial complexity: in order to check if there
exists an unidentifiable attack set $K$, with $|K| = k$, one needs to
certify the absence of invariant zeros for all possible
$2k$-dimensional attack sets. Thus, a conservative verification scheme
requires $\binom{n + p }{2k}$ tests. In Section
\ref{sec:graph_conditions} we present intuitive graph-theoretic
conditions for the existence of undetectable and unidentifiable attack
sets for a given sparsity pattern of the system matrices and generic
system parameters. Finally, Theorem \ref{Theorem: Dynamic
  identifiability of cyber-physical attacks} includes as a special
case Proposition 4 in \cite{FH-PT-SD:11}, which considers exclusively
output attacks.

\subsection{Fundamental limitations of active
  monitors}\label{sec:active}
An active monitor uses a control signal (unknown to the attacker) to
reveal the presence of attacks; see \cite{YM-BS:10a} for the case of
replay attacks.  In the presence of an active monitor with input
signal $w(t) = [w_x^\transpose(t) \; w_y^\transpose(t)]^\transpose$,
the system \eqref{eq: cyber_physical_fault} reads as
\begin{align*}
  \begin{split}
    E \dot x(t) &= Ax(t) + B_K u_K(t) + w_x(t),\\
    y(t) &= C x(t) + D_Ku_K(t) + w_y(t).
  \end{split}
\end{align*}
Although the attacker is unaware of the signal $w(t)$, active and
dynamic monitors share the same limitations.

\begin{theorem}{\bf \emph{(Limitations of active
      monitors)}}\label{thm:active_det}
  For the cyber-physical descriptor system \eqref{eq:
    cyber_physical_fault}, let $w(t)$ be an additive signal injected
  by an active monitor. The existence of undetectable (respectively
  unidentifiable) attacks does not depend upon the signal
  $w(t)$. Moreover, undetectable (respectively unidentifiable) attacks
  can be designed independently of $w(t)$.
\end{theorem}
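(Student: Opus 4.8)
The plan is to reduce the active-monitor case to the dynamic-monitor case by exploiting linearity and superposition, which is the recurring device throughout this section. The key observation is that the signal $w(t)$ enters the system \eqref{eq: cyber_physical_fault} additively in exactly the same way as the attack $(B_K u_K, D_K u_K)$, so its effect on the output can be split off. Concretely, write the output under the active monitor as $y(x_0, u_K, w, t)$ and use linearity to decompose it as $y(x_0, u_K, w, t) = y(x_0, u_K, 0, t) + y(0, 0, w, t)$, where the second term $y_w(t) := y(0,0,w,t)$ is a fixed signal entirely determined by $w$ and the system matrices, and is moreover \emph{known to the monitor} (since the monitor chooses $w$ and knows $E, A, C$) but need not be known to the attacker.

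First I would recall that, by Lemma \ref{undetectable_input}, an attack is undetectable by a dynamic monitor if and only if $y(x_1, u_K, t) = y(x_2, 0, t)$ for some initial conditions $x_1, x_2$, i.e. the attack's output contribution coincides with that of some unforced trajectory. Now in the presence of the active monitor, the monitor observes $y(x_0, u_K, w, t)$ and, knowing $w$, equivalently observes the ``corrected'' output $\tilde y(t) := y(x_0, u_K, w, t) - y_w(t) = y(x_0, u_K, 0, t)$. Thus detection/identification against an active monitor is \emph{exactly} detection/identification against a dynamic monitor applied to $\tilde y$. Consequently the attack set $K$ is undetectable (resp. unidentifiable) by an active monitor if and only if it is undetectable (resp. unidentifiable) by a dynamic monitor, which by Theorem \ref{Theorem: Dynamic detectability of cyber-physical attacks} (resp. Theorem \ref{Theorem: Dynamic identifiability of cyber-physical attacks}) is a condition purely on the matrices $(E, A, B_K, C, D_K)$ — in particular independent of $w(t)$.

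For the second claim — that the undetectable/unidentifiable attack can be \emph{designed} without knowledge of $w$ — I would argue as follows. The characterization in statement (iii) of Theorem \ref{Theorem: Dynamic detectability of cyber-physical attacks} produces an invariant zero $s \in \complex$ and vectors $x \neq 0$, $g$ with $(sE - A)x - B_K g = 0$ and $Cx + D_K g = 0$; the corresponding undetectable attack mode is $u_K(t) = g e^{st}$ with state $x(t) = x e^{st}$. This construction references only $E, A, B_K, C, D_K$, so the attacker (who is assumed omniscient about the system matrices and state) can build it with no access to $w$. The superposition argument above then shows this same attack remains undetectable when $w$ is present, since its output contribution $y(x_0, u_K, 0, t)$ is unchanged and still equals the output contribution of an admissible unforced trajectory, so the corrected output $\tilde y$ cannot distinguish it from the zero attack. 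The identification statement follows identically using Theorem \ref{Theorem: Dynamic identifiability of cyber-physical attacks} and the observation that $y(x_K - x_R, u_K - u_R, 0, t) = 0$ is unaffected by adding and subtracting $y_w(t)$.

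The main obstacle, and the point requiring care, is making the superposition step rigorous for \emph{descriptor} systems: one must check that the decomposition $y(x_0, u_K, w, t) = y(x_0, u_K, 0, t) + y(0, 0, w, t)$ is legitimate, i.e. that the relevant initial conditions remain consistent under (A2) when the inputs are split, and that assumption (A3) (smoothness of $u$, hence of the effective input $u_K + w$) is preserved — which it is, provided $w$ is taken smooth, a natural requirement on the monitor's test signal. Once linearity and well-posedness of \eqref{eq: cyber_physical_fault} are invoked exactly as in the proofs of Lemmas \ref{undetectable_input} and \ref{unidentifiable_input}, the argument closes; no new geometric machinery is needed beyond what Theorems \ref{Theorem: Dynamic detectability of cyber-physical attacks} and \ref{Theorem: Dynamic identifiability of cyber-physical attacks} already provide.
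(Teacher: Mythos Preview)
Your proposal is correct and follows essentially the same superposition argument as the paper's proof: both decompose the output additively to separate the contribution of $w(t)$ and then observe that the undetectability condition reduces to the $w=0$ (dynamic-monitor) case. The only cosmetic difference is that the paper splits the initial condition as $x = x_1 + x_2$ (with $x_2$ consistent for $w$) rather than using $0$ as the initial state for the $w$-component; your final paragraph already flags exactly this consistency issue, so the two arguments align.
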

\begin{proof}
  For the system \eqref{eq: cyber_physical_fault}, let $u(t)$ be the
  attack mode, and let $w(t)$ be the monitoring input. Let
  $y(x,u,w,t)$ denotes the output generated by the inputs $u(t)$ and
  $w(t)$ with initial condition $x = x_1 + x_2$. Observe that, because
  of the linearity of \eqref{eq: cyber_physical_fault}, we have
  $y(x,u,w,t) = y(x_1,u,0,t) + y(x_2,0,w,t)$, with consistent initial
  conditions $x_1$ and $x_2$. Then, an attack $u(t)$ is undetectable
  if and only if $y(x,u,w,t) = y(\bar x,0,w,t)$, or equivalently
  $y(x_1,u,0,t) + y(x_2,0,w,t) = y(\bar x_1,0,0,t) + y(x_2,0,w,t)$,
  for some initial conditions $x$ and $\bar x = \bar x_1 + x_2$. The
  statement follows, since, from the equality above, the detectability
  of $u(t)$ does not depend upon $w(t)$.
\end{proof}

As a consequence of Theorem \ref{thm:active_det}, the existence of
undetectable attacks is independent of the presence of known control
signals. Therefore, in a worst-case scenario, active monitors are as
powerful as dynamic monitors.
%
%
%
Since replay attacks are detectable by an active monitor
\cite{YM-BS:10a}, Theorem \ref{thm:active_det} shows that replay
attacks are not worst-case attacks.

\begin{remark}{\bf \emph{(Undetectable attacks in the presence of state
    and measurements noise)}}
The input $w(t)$ in Theorem \ref{thm:active_det} may represent sensors
and actuators noise. In this case, Theorem \ref{thm:active_det} states
that the existence of undetectable attacks for a noise-free system
implies the existence of undetectable attacks for the same system
driven by noise. The converse does not hold, since attackers may
remain undetected by injecting a signal compatible with the noise
statistics.
\oprocend
\end{remark}

\subsection{Specific results for index-one singular systems}
For many interesting real-world descriptor systems, including the
examples in Section \ref{example:power} and \ref{example:water}, the
algebraic system equations can be solved explicitly, and the
descriptor system \eqref{eq: cyber_physical_fault} can be reduced to a
nonsingular state space system. For this reason, this section presents
specific results for the case of \emph{index-one} systems
\cite{FLL:86}. In this case, without loss of generality, we assume the
system \eqref{eq: cyber_physical_fault} to be written in the canonical
form
\begin{align}
  \label{eq:weierstrass}
  \begin{split}
  \begin{bmatrix}
    E_{11} & 0\\
    0 & 0
  \end{bmatrix}
  \begin{bmatrix}
    \dot x_1 \\ \dot x_2
  \end{bmatrix}
  &=
  \begin{bmatrix}
    A_{11} & A_{12}\\
    A_{21} & A_{22}
  \end{bmatrix}
  \begin{bmatrix}
    x_1 \\ x_2
  \end{bmatrix}
  +
  \begin{bmatrix}
    B_1 \\ B_2
  \end{bmatrix}
  u_K(t),\\
  y(t) &=
  \begin{bmatrix}
    C_1 & C_2
  \end{bmatrix}
  \begin{bmatrix}
    x_1 \\ x_2
  \end{bmatrix}
  +
  D_K u_K(t),
  \end{split}
\end{align}
where $E_{11}$ is nonsingular and $A_{22}$ is nonsingular. 
Consequently, the state $x_1$ and $x_2$
are referred to as \emph{dynamic state} and \emph{algebraic state},
respectively. The algebraic state can be
expressed via the dynamic state and the attack mode as
\begin{align}\label{eq: elimination of bus voltages}
  x_2(t) = -A_{22}^{-1} A_{21} x_1 (t) - A_{22}^{-1} B_2 u_K(t).
\end{align}
The elimination of the algebraic state $x_2$ in the descriptor system
\eqref{eq:weierstrass} leads to the nonsingular state space system
\begin{align}
  \dot x_1 =&\; \underbrace{E_{11}^{-1}\left( A_{11} - A_{12}A_{22}^{-1}A_{21}
    \right)}_{\tilde A} x_1(t) \nonumber\\
    &\;+ \underbrace{ E_{11}^{-1}\left( B_1 - A_{12}
      A_{22}^{-1} B_2 \right)}_{\tilde B_K} u_K(t), \label{eq: Kron-reduced model}\\
  y(t) =&\; \underbrace{\left( C_1 - C_2 A_{22}^{-1}
      A_{21}\right)}_{\tilde C} x_1(t) + \underbrace{\left( D_K - C_2
      A_{22}^{-1} B_2 \right)}_{\tilde D_K} u_K(t). \nonumber
\end{align}

This reduction of the algebraic states is known as Kron reduction in
the literature on power networks and circuit theory
\cite{FD-FB:11d}. Hence, we refer to \eqref{eq: 
  Kron-reduced model} as the {\em Kron-reduced system}. 
  
Clearly, for any state trajectory $x_{1}(t)$ of the Kron-reduced
system \eqref{eq: Kron-reduced model}, the corresponding state
trajectory $[x_{1}^\transpose(t) \;x_{2}^\transpose(t)]^\transpose$ of
the (non-reduced) cyber-physical descriptor system \eqref{eq:
  cyber_physical_fault} can be recovered by identity \eqref{eq:
  elimination of bus voltages} and given knowledge of the input
$u_K(t)$.
The following subtle issues are easily visible in the Kron-reduced
system \eqref{eq: elimination of bus voltages}. First, a state attack
affects directly the output $y(t)$, provided that $C_2 A_{22}^{-1}B_2
u_K(t) \neq 0$. Second, since the matrix $A_{22}^{-1}$ is generally
fully populated, an attack on a single algebraic component can affect
not only the locally attacked state or its vicinity but larger parts
of the system.

According to the transformations in \eqref{eq: Kron-reduced
  model}, for each attack set $K$, the attack signature $(B_K,D_K)$ is mapped to
the corresponding signature $(\tilde B_K, \tilde D_K)$ in the
Kron-reduced system. 
%
%
As an apparent disadvantage, the sparsity pattern of the original
(non-reduced) cyber-physical descriptor system \eqref{eq:
  cyber_physical_fault} is lost in the Kron-reduced representation
\eqref{eq: Kron-reduced model}, and so is, possibly, the physical
interpretation of the state and the direct representation of system
components. However, as we show in the following lemma, the notions of
detectability and identifiability of an attack set $K$ defined for the
original descriptor system \eqref{eq: cyber_physical_fault} are
equivalent for the Kron-reduced system \eqref{eq: Kron-reduced
  model}. This property renders the low-dimensional and nonsingular
Kron-reduced system \eqref{eq: Kron-reduced model} attractive from a
computational point of view to design attack detection and
identification monitors; see \cite{FP-FD-FB:10yb}.

\begin{lemma}{\bf \emph{(Equivalence of detectability and
      identifiability under Kron reduction)}}
\label{lemma:equivalence}
For the cyber-physical descriptor system \eqref{eq:
  cyber_physical_fault}, the attack set $K$ is detectable
(respectively identifiable) if and only if it is detectable
(respectively identifiable) for the associated Kron-reduced system \eqref{eq:
  Kron-reduced model}.
\end{lemma}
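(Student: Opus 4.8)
The plan is to show that the descriptor system \eqref{eq: cyber_physical_fault} and its Kron-reduced counterpart \eqref{eq: Kron-reduced model} generate the same family of output trajectories, and then to invoke the purely output-based characterizations of (un)detectability and (un)identifiability in Lemma \ref{undetectable_input} and Lemma \ref{unidentifiable_input}. Since detectability and identifiability are defined only through the maps $y(x_0,u_K,t)$, a bijection between the trajectories of the two systems that preserves $y(\cdot)$ at all times immediately yields the claim, and it does so uniformly for the static case ($t\in\mathbb{N}_0$) and the dynamic case ($t\in\real_{\ge 0}$).

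First I would work in the index-one normal form \eqref{eq:weierstrass}. Writing $x=[x_1^\transpose \; x_2^\transpose]^\transpose$ and using that $A_{22}$ is nonsingular, the algebraic block of \eqref{eq:weierstrass} forces identity \eqref{eq: elimination of bus voltages}, $x_2(t)=-A_{22}^{-1}A_{21}x_1(t)-A_{22}^{-1}B_2 u_K(t)$. By assumptions (A2)--(A3), the consistent initial conditions of \eqref{eq: cyber_physical_fault} are exactly the vectors $x(0)=[x_1(0)^\transpose \; x_2(0)^\transpose]^\transpose$ with $x_1(0)$ free and $x_2(0)$ determined by \eqref{eq: elimination of bus voltages} at $t=0$; for such an initial condition the dynamic component $x_1(t)$ of the unique smooth descriptor solution coincides with the solution of the Kron-reduced ODE \eqref{eq: Kron-reduced model} under the same attack mode $u_K(t)$. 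Conversely, every solution $x_1(t)$ of \eqref{eq: Kron-reduced model} lifts, via \eqref{eq: elimination of bus voltages}, to a consistent solution of \eqref{eq: cyber_physical_fault} under the same $u_K(t)$. Substituting \eqref{eq: elimination of bus voltages} into the output equation of \eqref{eq:weierstrass} yields precisely the output equation of \eqref{eq: Kron-reduced model}, so along corresponding trajectories the outputs agree for all $t$.

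With this correspondence the equivalence is immediate. By Lemma \ref{undetectable_input}, $K$ is undetectable for \eqref{eq: cyber_physical_fault} iff there are consistent initial conditions $x_1,x_2$ and an attack mode $u_K(t)$ with $y(x_1,u_K,t)=y(x_2,0,t)$; by the bijection this holds iff the analogous identity holds for \eqref{eq: Kron-reduced model} with the induced dynamic initial states and the same $u_K(t)$, hence $K$ is detectable for one system iff for the other. For identifiability, note that the Kron reduction acts globally on the signature $(B,D)$, mapping $(B_K,D_K)\mapsto(\tilde B_K,\tilde D_K)$ and $(B_R,D_R)\mapsto(\tilde B_R,\tilde D_R)$ simultaneously; applying Lemma \ref{unidentifiable_input} together with the same trajectory correspondence (equivalently, applying the detectability argument to the difference attack $u_K-u_R$ over the attack set $K\cup R$) gives that $K$ is identifiable for \eqref{eq: cyber_physical_fault} iff it is identifiable for \eqref{eq: Kron-reduced model}.

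I expect the main obstacle to be purely bookkeeping: one must verify carefully, using (A2)--(A3) and the nonsingularity of $A_{22}$, that projection onto the dynamic state $x_1$ is a bijection from the consistent initial conditions of \eqref{eq: cyber_physical_fault} onto the state space of \eqref{eq: Kron-reduced model}, and that this bijection is compatible with the additive decomposition $x_0=x_1+x_2$ used in Lemmas \ref{undetectable_input} and \ref{unidentifiable_input}. As an alternative that bypasses trajectories, one can start from condition (iii) of Theorem \ref{Theorem: Dynamic detectability of cyber-physical attacks} and eliminate $x_2$ from the first block row of $(sE-A)x-B_Kg=0$, checking that $\{(sE-A)x-B_Kg=0,\ Cx+D_Kg=0\}$ admits a nonzero solution iff $\{(sI-\tilde A)x_1-\tilde B_Kg=0,\ \tilde C x_1+\tilde D_Kg=0\}$ does; this shows the two systems share the same invariant zeros and reproves the lemma, but some care is needed with the degenerate case $x_1=0$, $x_2\neq 0$, which the trajectory argument handles automatically.
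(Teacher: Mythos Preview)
Your proposal is correct. Both you and the paper begin by noting that the input/initial-condition to output map coincides for the descriptor system and its Kron reduction, but you then diverge in emphasis: you make the trajectory bijection your main argument and invoke Lemmas~\ref{undetectable_input}--\ref{unidentifiable_input} directly, whereas the paper carries out in detail the alternative you sketch at the end, namely eliminating $x_2$ from the pencil condition~(iii) of Theorem~\ref{Theorem: Dynamic detectability of cyber-physical attacks} to show that $(sE-A)x-B_Kg=0,\ Cx+D_Kg=0$ has a nontrivial solution iff $(sI-\tilde A)x_1-\tilde B_Kg=0,\ \tilde Cx_1+\tilde D_Kg=0$ does. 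Your trajectory route is arguably cleaner because it handles the $x_1=0,\ x_2\neq 0$ case automatically via the consistency constraint~\eqref{eq: elimination of bus voltages}, while the pencil route must argue separately (as the paper does, appealing to (A2)) that this case imposes no extra restriction; on the other hand, the pencil computation makes the equivalence of invariant zeros explicit, which is useful downstream.
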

\begin{proof}
  The lemma follows from the fact that the input and initial condition
  to output map for the system \eqref{eq: cyber_physical_fault}
  coincides with the corresponding map for the Kron-reduced system
  \eqref{eq: Kron-reduced model} and equation \eqref{eq: elimination
    of bus voltages}. Indeed, according to Theorem \ref{Theorem:
    Dynamic detectability of cyber-physical attacks}, the attack set
  $K$ is undetectable if and only if there exist $s \in \complex$, $g
  \in \real^{|K|}$, and $x = [x_{1}^{\transpose} \;
  x_{2}^{\transpose}]^{\transpose} \in \real^{n}$, with $x \neq 0$,
  such that
\begin{equation*}
	(s E - A)x - B_K g = 0 \mbox{ and } C x + D_K g = 0 \,.
\end{equation*}
Equivalently, by eliminating the algebraic constraints as in
\eqref{eq: elimination of bus voltages}, the attack set $K$ is
undetectable if and only if the conditons
\begin{equation*}
	(s I - \tilde A)x_{1} - \tilde B_K g = 0 \mbox{ and } \tilde C x_{1} + \tilde D_K g = 0 
\end{equation*}
are satisfied together with $ x_2 = -A_{22}^{-1} A_{21} x_1 -
A_{22}^{-1} B_2 g$. Notice that the latter equation is always
satisfied due to the consistency assumption (A2), and the equivalence
of detectability of the attack set $K$ follows. The equivalence of
attack identifiability follows by analogous arguments.
\end{proof}

\subsection{Attack detection and identification in presence of
  inconsistent initial conditions and impulsive attack
  signals}\label{Subsection: smoothness issues}
We now discuss the case of non-smooth attack signal and inconsistent
initial condition.
If the consistency assumption (A3) is dropped, then discontinuities in
the state $x(t \downarrow 0)$ may affect the measurements $y(t
\downarrow 0)$. For instance for index-one systems, an inconsistent
initial condition leads to an initial jump for the algebraic variable
$x_{2}(t \downarrow 0)$ to obey equation \eqref{eq: elimination of bus
  voltages}. Consequently, the inconsistent initial value
$[0^{\transpose} \; x_{2}(0)^{\transpose}]^{\transpose} \in \Ker(E)$
cannot be recovered through measurements.

Assumption (A4) requires the attack signal to be sufficiently smooth
such that $x(t)$ and $y(t)$ are at least continuous.  Suppose that
assumption (A4) is dropped and the input $u(t)$ belongs to the class
of impulsive smooth distributions $\mathcal C_{\textup{imp}} =
\mathcal C_{\textup{smooth}} \cup \mathcal C_{\textup{p-imp}}$, that
is, loosely speaking, the class of functions given by the linear
combination of a smooth function on $\mathbb R_{\geq 0}$ (denoted by
$\mathcal C_{\textup{smooth}}$) and Dirac impulses and their
derivatives at $t=0$ (denoted by $\mathcal C_{\textup{p-imp}}$), see
\cite{TG:93},\cite[Section 2.4]{PK-VLM:06}. In this case, an attacker
commanding an impulsive input $u(0)\in \mathcal C_{\textup{imp}}$ can
reset the initial state $x(0)$ and, possibly, evade detection.

The discussion in the previous two paragraphs can be formalized as
follows.
Let $\mathcal V_{c}$ be the subspace of points $x_{0} \in \mathbb
R^{n}$ of consistent initial conditions for which there exists an
input $u \in \mathcal C^{m}_{\textup{smooth}}$ and a state trajectory
$x \in \mathcal C^{n}_{\textup{smooth}}$ to the descriptor system
\eqref{eq: cyber_physical_fault} such that $y(t) = 0$ for all $t \in
\real_{\geq 0}$.
Let $\mathcal V_{d}$ (respectively $\mathcal W$) be the subspace of
points $x_{0} \in \mathbb R^{n}$ for which there exists an input $u
\in \mathcal C^{n+p}_{\textup{imp}}$ (respectively $u \in \mathcal
C^{n+p}_{\textup{p-imp}}$) and a state trajectory $x \in \mathcal
C^{n}_{\textup{imp}}$ (respectively $x \in \mathcal
C^{n}_{\textup{p-imp}}$) to the descriptor system \eqref{eq:
  cyber_physical_fault} such that $y(t) = 0$ for all $t \in
\real_{\geq 0}$.
The output-nulling subspace $\mc V_{d}$ can be decomposed as follows:
   
\begin{lemma}
  \label{Lemma: Decomposition of output-nulling subspace}
  {\bf \emph{(Decomposition of output-nulling space \cite[Theorem 3.2
      and Proposition 3.4]{TG:93}))}} $\mathcal V_{d} = \mathcal V_{c}
  + \mathcal W + \Ker(E)$.
\end{lemma}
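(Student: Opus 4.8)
The plan is to work in the Weierstrass canonical form of the regular pencil $(E,A)$ and to read off the three subspaces from the resulting slow/fast decomposition. Pick nonsingular $P,Q$ with $PEQ = \blkdiag(I,N)$ and $PAQ = \blkdiag(J,I)$, where $N$ is nilpotent of index equal to the index of $(E,A)$; in the coordinates $x = Q[x_1^{\transpose}\;x_2^{\transpose}]^{\transpose}$ the descriptor system \eqref{eq: cyber_physical_fault} splits into the ODE $\dot x_1 = Jx_1 + B_1 u$ and the fast equation $N\dot x_2 = x_2 + B_2 u$, with $y = C_1 x_1 + C_2 x_2 + Du$, and $\Ker(E)$ becomes $\{0\}\oplus\Ker(N)$. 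The reference \cite[Theorem 3.2 and Proposition 3.4]{TG:93} describes precisely how an impulsive-smooth trajectory of such a system decomposes: a genuinely smooth part emanating from a consistent initial value under the smooth part of the input, plus a purely impulsive transient at $t=0$ driven by the purely impulsive part of the input, plus an instantaneous jump along $\Ker(E)$ accounting for inconsistency of $x_0$.

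For the inclusion $\mathcal V_{c} + \mathcal W + \Ker(E) \subseteq \mathcal V_{d}$, note that $\mathcal V_{c}\subseteq\mathcal V_{d}$ because any smooth output-nulling trajectory with consistent initial condition is in particular impulsive-smooth; $\mathcal W\subseteq\mathcal V_{d}$ by definition; and $\Ker(E)\subseteq\mathcal V_{d}$ because, for $x_0\in\Ker(E)$, the choice $u\equiv 0$ gives in Weierstrass coordinates $x_1\equiv 0$ and, from $(sN-I)X_2(s) = N\,x_2(0) = 0$, also $X_2(s)=0$, hence $x(t)=0$ and $y(t)=0$ for all $t\ge 0$. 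Since $\mathcal V_{d}$ is a subspace, the sum is contained in it.

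For the reverse inclusion, take $x_0\in\mathcal V_{d}$ with an impulsive-smooth input $u = u_{\textup{sm}} + u_{\textup{imp}}$ and impulsive-smooth state trajectory $x$ producing $y\equiv 0$. Decompose $x = x_{\textup{sm}} + x_{\textup{imp}}$ as above and split $y=0$ into its smooth and its purely impulsive components, which must vanish separately by uniqueness of the impulsive-smooth decomposition. Vanishing of the smooth component says that $x_{\textup{sm}}$, starting from the consistent initial value $x_{\textup{sm}}(0^{+})$ under $u_{\textup{sm}}$, keeps the output at zero, i.e.\ $x_{\textup{sm}}(0^{+})\in\mathcal V_{c}$; vanishing of the purely impulsive component says that $u_{\textup{imp}}$ annihilates the impulsive output, so the associated post-impulse state lies in $\mathcal W$; and the remaining part of $x_0$, namely $x_0$ minus these two consistent pieces, is precisely the inconsistent jump and therefore lies in $\Ker(E)$. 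Adding the three pieces yields $x_0\in\mathcal V_{c} + \mathcal W + \Ker(E)$.

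The \emph{main obstacle} is the second direction: one has to make rigorous the claim that an impulsive-smooth output-nulling trajectory splits, uniquely and linearly, into a smooth part with consistent initial condition, a purely impulsive part, and a jump in $\Ker(E)$, and that the separate vanishing of the smooth and impulsive parts of $y$ pins down each piece in $\mathcal V_{c}$, $\mathcal W$, and $\Ker(E)$ respectively --- in particular that no cross-coupling between the smooth input and an impulsive state component (mediated by the nilpotent $N$) can null the output in a way not captured by this decomposition. This is exactly the technical content of \cite[Theorem 3.2 and Proposition 3.4]{TG:93}, after which the assembly is a routine linearity argument.
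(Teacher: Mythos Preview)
The paper gives no proof of this lemma; it is stated purely as a citation of \cite[Theorem 3.2 and Proposition 3.4]{TG:93}, with no accompanying argument. Your proposal is consistent with this --- you supply some explanatory scaffolding (Weierstrass form, the easy inclusion) but explicitly defer the substantive direction to the very same reference, so there is nothing to compare and the two are aligned.
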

In words, from an initial condition $x(0) \in \mathcal V_{d}$ the
output can be nullified by a smooth input or by an impulsive input
(with consistent or inconsistent initial conditions in
$\Ker(E)$).
   
In this work we focus on the smooth output-nulling subspace $\mathcal
V_{c}$, which is exactly space of zero dynamics identified in Theorems
\ref{Theorem: Dynamic detectability of cyber-physical attacks} and
\ref{Theorem: Dynamic identifiability of cyber-physical attacks}.
Hence, by Lemma \ref{Lemma: Decomposition of output-nulling subspace},
for inconsistent initial conditions, the results presented in this
section are valid only for strictly positive times $t>0$. On the
other hand, if an attacker is capable of injecting impulsive signals,
then it can avoid detection for initial conditions $x(0) \in \mathcal
W$.

\section{Graph theoretic detectability
  conditions}\label{sec:graph_conditions}
In this section we characterize undetectable attacks against
cyber-physical systems from a structural perspective. In particular we
will derive detectability conditions based upon a connectivity
property of a graph associated with the system. For ease of notation,
we now drop the subscript $K$ from $B_K$, $D_K$, and $u_K(t)$.

\subsection{Preliminary notions}
We start by recalling some useful facts about structured systems and
structural properties \cite{KJR:88,WMW:85}. Let a \emph{structure
  matrix} $[M]$ be a matrix in which each entry is either a fixed zero
or an indeterminate parameter. The system
\begin{align}
  \label{eq:structural_descriptor}
  \begin{split}
    [E] \dot x(t) &= [A] x(t) + [B] u(t),\\
    y(t) &= [C] x(t) + [D] u(t).
  \end{split}
\end{align}
is called \emph{structured system}, and it is sometimes referred to
with the tuple $([E],[A],[B],[C],[D])$ of structure matrices. A system
$(E,A,B,C,D)$ is an admissible realization of $([E],[A],[B],[C],[D])$
if it can be obtained from the latter by fixing the indeterminate
entries at some particular value. Two systems are structurally
equivalent if they are both an admissible realization of the same
structured system. Let $d$ be the number of indeterminate entries of a
structured system altogether. By collecting the indeterminate
parameters into a vector, an admissible realization is mapped to a
point in the Euclidean space $\real^d$. A property which can be
asserted on a dynamical system is called \emph{structural} if,
informally, it holds for \emph{almost all} admissible realizations. To
be more precise, we say that a property is structural if and only if
the set of admissible realizations satisfying such property forms a
dense subset of the parameters space.\footnote{A subset $S \subseteq P
  \subseteq \real^d$ is dense in $P$ if, for each $r \in P$ and every
  $\varepsilon > 0$, there exists $s \in S$ such that the Euclidean
  distance $\|s - r\| \le \varepsilon$.} For instance,
left-invertibility of a nonsingular system is a structural property
with respect to $\real^d$ \cite{JMD-CC-JW:03}.

Consider the structured cyber-physical system
\eqref{eq:structural_descriptor}. It is often the case that, for the
tuple $(E,A,B,C,D)$ to be an admissible realization of
\eqref{eq:structural_descriptor}, the numerical entries need to
satisfy certain algebraic relations. For instance, for $(E,A,B,C,D)$
to be an admissible power network realization, the matrices $E$ and
$A$ need to be of the form \eqref{eq: power network descriptor system
  model}. Let $\mathbb S \subseteq \mathbb{R}^d$ be the admissible
parameter space. We make the following assumption:
\begin{itemize}
\item[(A4)] the admissible parameters space $\mathbb S$ is a polytope
  of $\mathbb R^d$, that is, $\mathbb S = \setdef{x \in \real^d}{M x
    \ge 0}$ for some matrix $M$.
\end{itemize}
It should be noticed that assumption (A4) is automatically verified
for the case of power networks \cite[Lemma
3.1]{FP-AB-FB:10u}. Unfortunately, if the admissible parameters space
is a subset of $\real^d$, then classical structural system-theoretic
results are, in general, not valid \cite[Section 15]{KJR:88}.

We now define a mapping between dynamical systems in descriptor form
and digraphs. Let ($[E]$,$[A]$,$[B]$,$[C]$,$[D]$) be a structured
cyber-physical system under attack. We associate a directed graph
$G=(\V,\mathcal{E})$ with the tuple
($[E]$,$[A]$,$[B]$,$[C]$,$[D]$). The vertex set is $\V = \mathcal{U}
\cup \mathcal{X} \cup \mathcal{Y}$, where
$\mathcal{U}=\{u_1,\dots,u_{m}\}$ is the set of input vertices,
$\mathcal{X}=\{x_1,\dots,x_{n}\}$ is the set of state vertices, and
$\mathcal{Y}=\{y_1,\dots,y_{p}\}$ is the set of output vertices. If
$(i,j)$ denotes the edge from the vertex $i$ to the vertex $j$, then
the edge set $\mathcal{E}$ is $\mathcal{E}_{[E]} \cup
\mathcal{E}_{[A]} \cup \mathcal{E}_{[B]} \cup \mathcal{E}_{[C]} \cup
\mathcal{E}_{[D]}$, with $\mathcal{E}_{[E]}=\{(x_j,x_i) : [E]_{ij}\neq
0\}$, $\mathcal{E}_{[A]}=\{(x_j,x_i) : [A]_{ij}\neq 0\}$,
$\mathcal{E}_{[B]}=\{(u_j,x_i) : [B]_{ij}\neq 0\}$,
$\mathcal{E}_{[C]}=\{(x_j,y_i) : [C]_{ij}\neq 0\}$, and
$\mathcal{E}_{[D]}=\{(u_j,y_i) : [D]_{ij}\neq 0\}$. In the latter, for
instance, the expression $[E]_{ij} \neq 0$ means that the $(i,j)$-th
entry of $[E]$ is a free parameter.

\begin{example}{\bf \emph{(Power network structural
      analysis)}}\label{Example: power network structural analysis}
  \begin{figure}[tb!]
    \centering
    \includegraphics[width=.7\columnwidth]{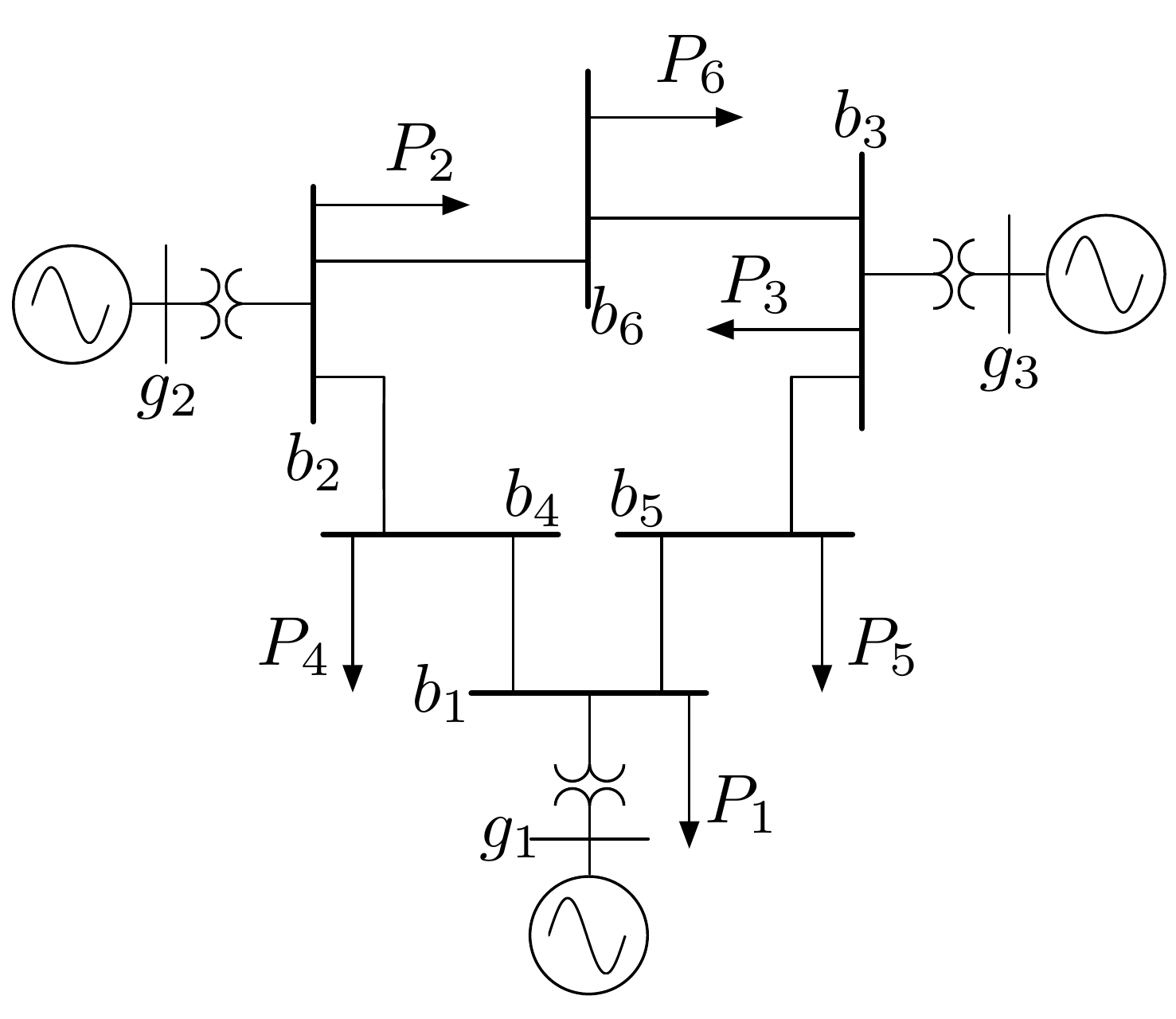}\\
    \caption{WSSC power system with $3$ generators and $6$ buses. The
      numerical value of the network parameters can be found in
      \cite{ES:04}.}\label{power_network}
  \end{figure}
  \begin{figure}
    \centering
    \includegraphics[width=.7\columnwidth]{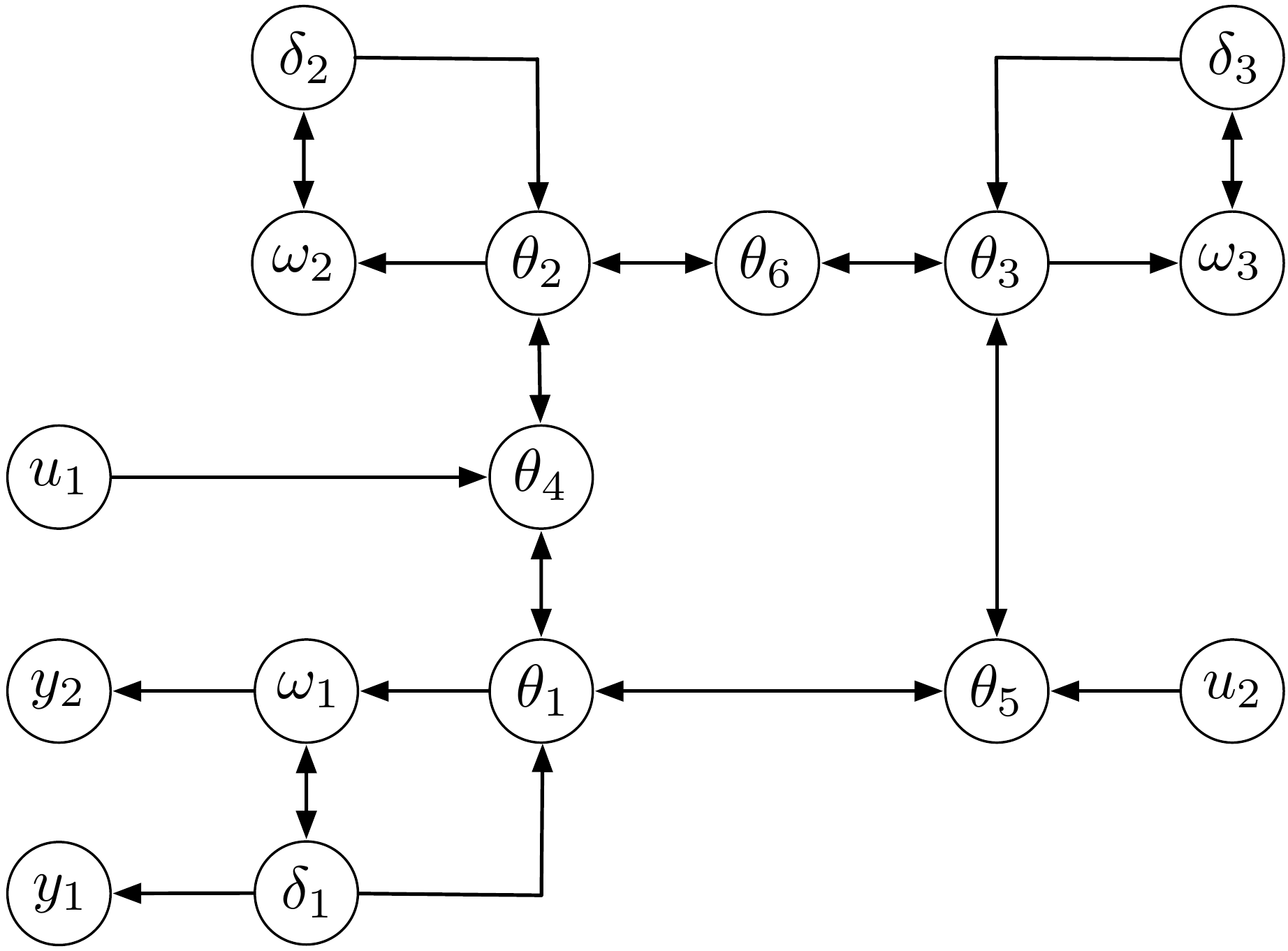}\\
    \caption{The digraph associated with the network in
      Fig. \ref{power_network}. The self-loops of the vertices
      $\{\delta_1,\delta_2,\delta_3\}$,
      $\{\omega_1,\omega_2,\omega_3\}$, and
      $\{\theta_1,\dots,\theta_6\}$ are not drawn. The inputs $u_1$
      and $u_2$ affect respectively the bus $b_4$ and the bus
      $b_5$. The measured variables are the rotor angle and frequency
      of the first generator.}\label{digraph}
  \end{figure}
  Consider the power network illustrated in Fig. \ref{power_network},
  where, being $e_i$ the $i$-th canonical vector, we take
  $[E]=\text{blkdiag}(1,1,1,M_1,M_2,M_3,0,0,0,0,0,0)$, $[B]=[e_8 \;
  e_9]$, $[C]=[e_1 \; e_4]^\transpose$, $[D] = 0$, and $[A]$ equal to
\begin{align*}
      \left[
    \begin{smallmatrix}
      0 & 0 & 0 & 1 & 0 & 0 & 0 & 0 & 0 & 0 & 0 & 0\\
      0 & 0 & 0 & 0 & 1 & 0 & 0 & 0 & 0 & 0 & 0 & 0\\
      0 & 0 & 0 & 0 & 0 & 1 & 0 & 0 & 0 & 0 & 0 & 0\\
      a_{4,1} & 0 & 0 & a_{4,4} & 0 & 0 & a_{4,7} & 0 & 0 & 0 & 0 & 0\\
      0 & a_{5,2} & 0 & 0 & a_{5,5} & 0 & 0 & a_{5,8} & 0 & 0 & 0 & 0\\
      0 & 0 & a_{6,3} & 0 & 0 & a_{6,6} & 0 & 0 & a_{6,9} & 0 & 0 & 0\\
      a_{7,1} & 0 & 0 & 0 & 0 & 0 & a_{7,7} & 0 & 0 & a_{7,10} & a_{7,11} & 0\\
      0 & a_{8,2} & 0 & 0 & 0 & 0 & 0 & a_{8,8} & 0 & a_{8,10} & 0 & a_{8,12}\\
      0 & 0 & a_{9,3} & 0 & 0 & 0 & 0 & 0 & a_{9,9} & 0 & a_{9,11} & a_{9,12}\\
      0 & 0 & 0 & 0 & 0 & 0 & a_{10,7} & a_{10,8} & 0 & a_{10,10} & 0 & 0\\
      0 & 0 & 0 & 0 & 0 & 0 & a_{11,7} & 0 & a_{11,9} & 0 & a_{11,11} & 0\\
      0 & 0 & 0 & 0 & 0 & 0 & 0 & a_{12,8} & a_{12,9} & 0 & 0 & a_{12,12}
      \end{smallmatrix}
    \right]
\end{align*}
The digraph associated with the structure matrices
$([E],[A],[B],[C],[D])$ is shown in Fig. \ref{digraph}.  \oprocend
\end{example}

\subsection{Network vulnerability with known initial
  state}\label{sec:left_inv}
We derive graph-theoretic detectability conditions for two different
scenarios. Recall from Lemma \ref{undetectable_input} that an
attack $u(t)$ is undetectable if $y(x_1,u,t) = y(x_2,0,t)$ for some
initial states $x_1$ and $x_2$. In this section, we assume that the
system state is known at the failure initial time,\footnote{The
  failure initial state can be estimated through a state observer
  \cite{ES:04}.} so that an attack $u(t)$ is undetectable if
$y(x_0,u,t) = y(x_0,0,t)$ for some system initial state $x_0$. The
complementary case of unknown initial state is studied in Section
\ref{sec:inv_zeros}.

Consider the cyber-physical system described by the matrices
$(E,A,B,C,D)$, and notice that, if the initial state is known, then
the attack undetectability condition $y(x_0,u,t) = y(x_0,0,t)$
coincides with the system being not left-invertible.\footnote{A
  regular descriptor system is left-invertible if and only if its
  transfer matrix $G(s)$ is of full column rank for all almost all $s
  \in \mathbb{C}$, or if and only if $\left[\begin{smallmatrix}
      s  E  -  A  &  -B \\
      C & D
  \end{smallmatrix}\right]$ has full column rank for almost all $s \in
\mathbb{C}$ \cite[Theorem 4.2]{TG:93}.} Recall that a subset $S
\subseteq \real^d$ is an \emph{algebraic variety} if it coincides with
the locus of common zeros of a finite number of polynomials
\cite{WMW:85}. 
Consider the following
observation.

\begin{lemma}{\bf \emph{(Polytopes and algebraic
      varieties)}}\label{lemma:algebraic}
  Let $S \subseteq \real^d$ be a polytope, and let $T \subseteq
  \real^d$ be an algebraic variety. Then, either $S \subseteq T$, or
  $S \setminus (S \cap T)$ is dense in $S$.
\end{lemma}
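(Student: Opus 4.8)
The plan is to exploit the fact that a polytope with nonempty interior in $\real^d$ is not contained in any proper algebraic variety, together with the standard fact that a nonzero polynomial vanishes only on a set with empty interior. First I would reduce to the case in which $S$ has full dimension $d$ (nonempty interior): if $S$ lies in a proper affine subspace, restrict coordinates to that subspace and replace $\real^d$ by $\real^{d'}$ with $d' = \dim(\operatorname{aff}(S))$; an algebraic variety intersected with an affine subspace is again an algebraic variety of that subspace, so the statement is unchanged. Thus assume $S$ has nonempty interior.

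\begin{pfof}{Lemma \ref{lemma:algebraic}}
  Without loss of generality we may assume $S$ has nonempty interior
  in $\real^d$; otherwise we replace $\real^d$ by the affine hull of
  $S$, in which $S$ is full-dimensional and in which $T$ restricts to
  an algebraic variety. Let $T$ be the common zero locus of
  polynomials $f_1,\dots,f_r$. If every $f_i$ vanishes identically on
  $S$, then, since $S$ has nonempty interior and a polynomial vanishing
  on an open set is identically zero, each $f_i$ is the zero
  polynomial, whence $T = \real^d \supseteq S$ and the first
  alternative holds. Otherwise, some $f_j$ does not vanish identically
  on $S$, and so $f_j$ is not the zero polynomial. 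It then suffices to
  show that $S \setminus \{x \in S : f_j(x) = 0\}$ is dense in $S$,
  because $S \setminus (S \cap T) \supseteq S \setminus \{x \in S :
  f_j(x) = 0\}$. Fix $r_0 \in S$ and $\varepsilon > 0$; we must
  exhibit $s \in S$ with $f_j(s) \neq 0$ and $\|s - r_0\| \le
  \varepsilon$. Pick any interior point $p$ of $S$ with $f_j(p) \neq
  0$, which exists since the zero set of the nonzero polynomial $f_j$
  has empty interior and $\operatorname{int}(S) \neq \emptyset$.
  Consider the segment $\lambda \mapsto s(\lambda) = (1-\lambda) r_0 +
  \lambda p$, which lies in $S$ by convexity for $\lambda \in [0,1]$.
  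The map $\lambda \mapsto f_j(s(\lambda))$ is a univariate polynomial
  that is nonzero at $\lambda = 1$, hence it has finitely many roots,
  so $f_j(s(\lambda)) \neq 0$ for all sufficiently small $\lambda >
  0$; choosing $\lambda$ small enough also guarantees $\|s(\lambda) -
  r_0\| = \lambda \|p - r_0\| \le \varepsilon$. This produces the
  desired $s$, so $S \setminus (S \cap T)$ is dense in $S$.
\end{pfof}

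The approach relies on three elementary ingredients: a polytope is convex (so segments between its points stay inside), a polytope with nonempty interior cannot sit inside the zero set of a nonzero polynomial, and restricting along a line turns a multivariate polynomial into a univariate one with finitely many roots. The only mildly delicate point — the step I expect to be the main obstacle to state cleanly — is the dimension reduction at the start: one must check that when $S$ is a lower-dimensional polytope, its affine hull is itself a genuine affine subspace on which the ``polynomial vanishing on an open subset is zero'' principle applies, and that intersecting $T$ with this affine hull and re-coordinatizing preserves the property of being an algebraic variety. This is routine but worth spelling out, since the lemma is later applied with $S = \mathbb{S}$ the admissible polytope of assumption (A4), which need not be full-dimensional in $\real^d$.
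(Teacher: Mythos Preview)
Your proof is correct and follows the same overall strategy as the paper: pass to the affine hull of $S$ so that $S$ becomes full-dimensional, then use that a nonzero polynomial has a nowhere-dense zero set. The paper's version is terser: it lets $P$ be the smallest linear subspace containing $S$, observes that if some $\phi_i$ does not vanish on $P$ then $P \setminus (P \cap T)$ is dense in $P$ (citing \cite{WMW:85}), and then asserts in one line that $S \setminus (S \cap T)$ is dense in $S$. Your segment argument --- restricting $f_j$ to the line from $r_0$ to an interior point $p$ with $f_j(p) \neq 0$ and using finiteness of the roots of a univariate polynomial --- is more elementary and self-contained than the paper's citation, and it proves density \emph{in $S$} directly rather than inferring it from density in the ambient hull; that last inference is exactly where the paper's argument is briefest, and your explicit use of convexity is what fills it in. Your choice of the affine hull rather than a vector subspace is also cleaner for polytopes not passing through the origin.
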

\begin{proof}
  Let $T \subseteq \real^d$ be the algebraic variety described by the
  locus of common zeros of the polynomials
  $\{\phi_1(x),\dots,\phi_t(x)\}$, with $t \in \mathbb N$, $t <
  \infty$. Let $P \subseteq \real^d$ be the smallest vector subspace
  containing the polytope $S$. Then $P \subseteq T$ if and only if
  every polynomial $\phi_i$ vanishes identically on $P$. Suppose that
  the polynomial $\phi_i$ does not vanish identically on $P$. Then,
  the set $T \cap P$ is contained in the algebraic variety $\{x \in P
  : \phi_i(x) = 0\}$, and, therefore \cite{WMW:85}, the complement $P
  \setminus (P \cap T)$ is dense in $P$. By definition of a dense set,
  the set $S \setminus (S \cap T)$ is also dense in $S$.
\end{proof}

In Lemma \ref{lemma:algebraic} interpret the polytope $S$ as the
admissible parameters space of a structured cyber-physical system.
Then we have shown that left-invertibility of a cyber-physical system
is a structural property even when the admissible parameters space is
a polytope of the whole parameters space. Consequently, given a
structured cyber-physical system, either every admissible realization
admits an undetectable attack, or there is no undetectable attack in
almost all admissible realizations. Moreover, in order to show that
almost all realizations have no undetectable attacks, it is sufficient
to prove that this is the case for some specific admissible
realizations. Before presenting our main result, we recall the
following result. Let $\bar E$ and $\bar A$ be $N$-dimensional square
matrices, and let $G(s\bar E-\bar A)$ be the graph associated with the
matrix $s \bar E - \bar A$ that consists of $N$ vertices, and an edge
from vertex $j$ to $i$ if $\bar A_{ij} \neq 0$ or $\bar E_{ij} \neq
0$. The matrix $s[\bar E] - [\bar A]$ is said to be structurally
degenerate if, for any admissible realization $\bar E$ (respectively
$\bar A$) of $[\bar E]$ (respectively $[\bar A]$), the determinant
$|s\bar E - \bar A|$ vanishes for all $s \in \mathbb{C}$. Recall the
following definitions from \cite{JMD-CC-JW:03}. For a given graph $G$,
a path is a sequence of vertices where each vertex is connected to the
following one in the sequence. A path is simple if every vertex on the
path (except possibly the first and the last vertex) occurs only
once. Two paths are disjoint if they consist of disjoint sets of
vertices. A set of $l$ mutually disjoint and simple paths between two
sets of vertices $S_1$ and $S_2$ is called a \emph{linking} of size
$l$ from $S_1$ to $S_2$. A simple path in which the first and the last
vertex coincide is called cycle; a \emph{cycle family} of size $l$ is
a set of $l$ mutually disjoint cycles. The length of a cycle family
equals the total number of edges in the family.

\begin{theorem}{\bf \emph{(Structural rank of a square matrix
      \cite{KJR:94})}}\label{structural_rank}
  The structure $N$-dimensional matrix $s[\bar E] - [\bar A]$ is
  structurally degenerate if and only if there exists no cycle family
  of length $N$ in $G(s[\bar E] - [\bar A])$.
\end{theorem}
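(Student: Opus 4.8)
The plan is to expand $\det(s\bar E - \bar A)$ by the Leibniz formula and match its nonvanishing terms with cycle families of $G(s[\bar E]-[\bar A])$. Treating every free entry of $[\bar E]$ and $[\bar A]$ as an independent indeterminate (the standing convention for structure matrices in Section \ref{sec:graph_conditions}), write
\begin{align*}
  \det(s\bar E - \bar A) = \sum_{\sigma} \operatorname{sgn}(\sigma) \prod_{i=1}^{N} \bigl( s [\bar E]_{i\sigma(i)} - [\bar A]_{i\sigma(i)} \bigr),
\end{align*}
the sum over permutations $\sigma$ of $\until{N}$, which is a polynomial $p$ in $s$ and the indeterminate parameters. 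First I would record that $s[\bar E]-[\bar A]$ is structurally degenerate precisely when $p$ is the zero polynomial: if $p\equiv 0$, then every admissible realization satisfies $|s\bar E - \bar A| = 0$ for all $s$; conversely, if $p\not\equiv 0$, then for generic parameters $p(s,\cdot)$ is a nonzero polynomial in $s$, hence does not vanish for all $s\in\complex$.

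Next I would set up the dictionary between permutations and cycle families. Each $\sigma$ determines the edge set $\{(\sigma(i),i):i\in\until{N}\}$ in $G(s[\bar E]-[\bar A])$; all these edges are present exactly when, for every $i$, at least one of $[\bar E]_{i\sigma(i)}$, $[\bar A]_{i\sigma(i)}$ is a free parameter, i.e. exactly when the $i$-th factor in the product above is not identically zero. The disjoint-cycle decomposition of such a $\sigma$ is a set of mutually disjoint cycles covering all $N$ vertices, hence a cycle family whose length (total number of edges) equals $N$; conversely every length-$N$ cycle family arises from a unique permutation. Consequently, if $G(s[\bar E]-[\bar A])$ has no cycle family of length $N$, then for every $\sigma$ at least one factor $s[\bar E]_{i\sigma(i)} - [\bar A]_{i\sigma(i)}$ vanishes identically, so each summand and therefore $p$ vanishes identically, and $s[\bar E]-[\bar A]$ is structurally degenerate.

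For the converse, suppose a cycle family of length $N$ exists, with associated permutation $\sigma_0$; its summand is a nonzero polynomial, and the crux is that no cancellation with the other summands can destroy it. Expanding a summand, a monomial coming from $\sigma$ has the form $\pm\, s^{|S|}\prod_{i\in S}[\bar E]_{i\sigma(i)}\prod_{i\notin S}[\bar A]_{i\sigma(i)}$ for some $S\subseteq\until{N}$; it is thus supported on the position set $\{(i,\sigma(i))\}$, with the $[\bar E]$-factors indexed by the rows in $S$ and the $[\bar A]$-factors by the rows not in $S$. Since distinct free entries are distinct indeterminates, equality of two such monomials forces the row-indexed position sets to coincide, hence $S$ to coincide and then $\sigma$ to coincide. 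Monomials from different permutations are therefore never equal, no cancellation occurs, and $p\not\equiv 0$; choosing generic parameters (or, if one wishes to stay inside the admissible polytope, invoking Lemma \ref{lemma:algebraic} on the zero set of $p$) yields a realization with $|s\bar E - \bar A|\not\equiv 0$, so $s[\bar E]-[\bar A]$ is not structurally degenerate.

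The only nonroutine step is the no-cancellation argument in the last paragraph; the rest is bookkeeping on the Leibniz expansion and the graph correspondence. It is essential that the free entries of $[\bar E]$ and $[\bar A]$ be independent indeterminates, which is exactly the definition of a structure matrix used here, so no appeal to assumption (A4) is needed for the theorem itself.
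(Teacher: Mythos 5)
The paper does not prove this theorem --- it is quoted directly from \cite{KJR:94} --- so there is no in-paper argument to compare against; your proof is the standard one (Leibniz expansion, the bijection between permutations with all factors nonvanishing and cycle families of length $N$, and no cancellation because distinct free entries are independent indeterminates) and it is correct. Your closing caveat is also well placed: the theorem holds under the full-independence convention for structure matrices, and it is Theorem \ref{thm:vulnerability}, not this result, that must invoke Lemma \ref{lemma:algebraic} to cope with the polytope constraint (A4).
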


We are now able to state our main result on structural detectability.

\begin{theorem}{\bf \emph{(Structurally undetectable
    attack)}}\label{thm:vulnerability} Let the parameters space of the
structured cyber-physical system $([E],[A],[B],[C],[D])$ define a
polytope in $\real^d$ for some $d \in \mathbb{N}_0$. Assume that $s[E]
- [A]$ is structurally non-degenerate. The system
$([E],[A],[B],[C],[D])$ is structurally left-invertible if and only if
there exists a linking of size $|\mathcal{U}|$ from $\mathcal{U}$ to
$\mathcal{Y}$.
\end{theorem}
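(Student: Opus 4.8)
The plan is to reduce the structural left-invertibility statement to a structural rank condition on an auxiliary pencil, and then invoke Theorem~\ref{structural_rank} on that pencil. Recall from the footnote preceding Lemma~\ref{lemma:algebraic} that a regular descriptor system is left-invertible if and only if the Rosenbrock system matrix $P(s) = \left[\begin{smallmatrix} sE - A & -B \\ C & D \end{smallmatrix}\right]$ has full column rank for almost all $s \in \mathbb{C}$. By Lemma~\ref{lemma:algebraic}, left-invertibility is a structural property on the polytope $\mathbb{S}$, so it suffices to characterize when $P(s)$ is \emph{structurally} full column rank, i.e.\ when there is no structural deficiency. The column dimension of $P(s)$ is $n + |\mathcal{U}|$, so full column rank is equivalent to the structured matrix $s[\bar E] - [\bar A]$ being structurally non-degenerate, where this $N$-dimensional pencil (with $N = n + |\mathcal{U}|$) is built by taking $P(s)$ together with $|\mathcal{U}|$ extra rows that select the output variables — more precisely, by forming a square matrix whose associated graph $G(s[\bar E] - [\bar A])$ is obtained from $G$ by adding edges from each output vertex back to a fresh copy of the input vertices (this is the standard trick turning a rectangular left-invertibility question into a square one).

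The key steps, in order, are: (1) write down the square completion of $P(s)$ and identify its digraph with the graph $G$ augmented by a perfect matching from $\mathcal{Y}$ to $\mathcal{U}$; (2) apply Theorem~\ref{structural_rank} to conclude that $P(s)$ fails to be structurally full column rank exactly when the augmented graph has no cycle family of length $N$; (3) translate "no cycle family of full length" in the augmented graph into the statement about $G$ itself. For step~(3), a cycle family covering all $N = n + |\mathcal{U}|$ vertices must use all $|\mathcal{U}|$ of the added $\mathcal{Y}\to\mathcal{U}$ edges; each such edge forces the cycle passing through it to contain a path in $G$ from some input vertex to the matched output vertex, and disjointness of the cycles forces these paths to be vertex-disjoint. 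Conversely, a linking of size $|\mathcal{U}|$ from $\mathcal{U}$ to $\mathcal{Y}$, together with the hypothesis that $s[E] - [A]$ is structurally non-degenerate (so that the state vertices not on the linking can be covered by a cycle family in $G(s[E]-[A])$ — here I would invoke Theorem~\ref{structural_rank} a second time in the reverse direction), can be assembled into a full-length cycle family in the augmented graph. The hypothesis of structural non-degeneracy of $s[E]-[A]$ is exactly what guarantees the "leftover" state vertices are coverable, so it is essential and not merely technical.

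The main obstacle I expect is step~(3): carefully verifying that the disjoint paths of the linking and the cycle family covering the remaining state vertices can be merged into a single cycle family of the augmented graph without creating vertex collisions, and conversely extracting a linking of the right size from an arbitrary full-length cycle family. This is a combinatorial bookkeeping argument of the type carried out in \cite{JMD-CC-JW:03,JWW:91} for nonsingular systems; the contribution here is checking that nothing breaks when $E$ is singular, which is where regularity (assumption~(A1)) and the structural non-degeneracy of $s[E]-[A]$ enter. A secondary point to handle is that the matrices $[B]$, $[C]$, $[D]$ in our setting are submatrices of fixed identity/zero patterns rather than fully generic; one should note that this only restricts which graphs $G$ arise and does not affect the validity of Theorem~\ref{structural_rank}, which holds for any structure matrix. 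I would also remark that this argument simultaneously extends the left-invertibility characterization of \cite{JWW:91} from nonsingular to regular descriptor systems, as advertised in the introduction.
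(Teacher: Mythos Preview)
Your outline is close to the paper's proof, and the ``only if'' direction is essentially identical: both pass to the square pencil $s[\bar E]-[\bar A]=\left[\begin{smallmatrix} s[E]-[A] & -[B] \\ [C] & [D]\end{smallmatrix}\right]$ and invoke Theorem~\ref{structural_rank} to conclude that no full-length cycle family exists when the maximal linking has size $|\mathcal U|-1$. Your description of the ``square completion'' is imprecise, however: one does not add extra rows or fresh copies of the input vertices. One simply \emph{selects} $|\mathcal U|$ of the output rows of $P(s)$, obtaining an $(n+|\mathcal U|)$-square pencil whose digraph identifies each selected output vertex with one input vertex; a $\mathcal U\to\mathcal Y$ path in $G$ then becomes a cycle through the merged vertex, which is exactly how the paper phrases it.

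The substantive divergence is the ``if'' direction. You propose to assemble a full-length cycle family by combining the linking with a cycle cover of the leftover state vertices, invoking Theorem~\ref{structural_rank} in reverse on $s[E]-[A]$. The obstacle you flag here is not mere bookkeeping but the entire content of this direction: non-degeneracy of $s[E]-[A]$ only furnishes a cycle family covering \emph{all} of $\mathcal X$, not the complement of the linking, so the ``merging'' you defer to \cite{JWW:91,JMD-CC-JW:03} is the argument itself, not a detail. The paper sidesteps this combinatorics with a constructive shortcut: set to zero every entry of $E$ and $A$ corresponding to an edge not on the chosen linking, so that the transfer matrix $D+C(sE-A)^{-1}B$ becomes diagonal and nonsingular; this exhibits one left-invertible realization, and Lemma~\ref{lemma:algebraic} then yields density in the polytope $\mathbb S$. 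Your route can be made to work, but the paper's direct construction is shorter and avoids precisely the obstacle you correctly identified as the hard step.
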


Theorem \ref{thm:vulnerability} can be interpreted in the context of
cyber-physical systems. Indeed, since $|sE-A| \neq 0$ by assumption
(A1), and because of assumption (A4), Theorem \ref{thm:vulnerability}
states that there exists a structural undetectable attack if and only
if there is no linking of size $|\mathcal{U}|$ from $\mathcal{U}$ to
$\mathcal{Y}$, provided that the network state at the failure time is
known.

\begin{proof}
  Because of Lemma \ref{lemma:algebraic}, we need to show that, if
  there are $|\mathcal{U}|$ disjoint paths from $\mathcal{U}$ to
  $\mathcal{Y}$, then there exists admissible left-invertible
  realizations. Conversely, if there are at most $|\mathcal{U}|-1$
  disjoint paths from $\mathcal{U}$ to $\mathcal{Y}$, then every
  admissible realization is not left-invertible.

  \emph{(If)} Let $(E,A,B,C,D)$, with $|sE-A| \neq 0$, be
  an admissible realization, and suppose there exists a linking of
  size $|\mathcal{U}|$ from $\mathcal{U}$ to $\mathcal{Y}$. Without
  affecting generality, assume $|\mathcal{Y}| = |\mathcal{U}|$.
  For the left-invertibility property we need 
  \begin{align*}
    \left|
      \begin{bmatrix}
        s  E  -  A  &  -B \\
        C & D
      \end{bmatrix}
    \right| = \left|sE-A\right| \left|D + C (sE-A)^{-1} B\right|
  \neq 0,
  \end{align*}
  and hence we need $\left|D + C (sE-A)^{-1} B\right| \neq 0$. Notice
  that $D + C (sE-A)^{-1} B$ corresponds to the transfer matrix of the
  cyber-physical system. Since there are $|\mathcal{U}|$ independent
  paths from $\mathcal{U}$ to $\mathcal{Y}$, the matrix $D + C
  (sE-A)^{-1} B$ can be made nonsingular and diagonal by removing some
  connection lines from the network. In particular, for a given
  linking of size $|\mathcal{U}|$ from $\mathcal{U}$ to $\mathcal{Y}$,
  a nonsingular and diagonal transfer matrix is obtained by setting to
  zero the entries of $E$ and $A$ corresponding to the edges not in
  the linking. Then there exist admissible left-invertible
  realizations, and thus the system $([E],[A],[D],[C],[D])$ is
  structurally left-invertible.

  \emph{(Only if)} Take any subset of $|\mathcal{U}|$ output vertices,
  and let $|\mathcal{U}|-1$ be the maximum size of a linking from
  $\mathcal{U}$ to $\mathcal{Y}$. Let $[\bar E]$ and $[\bar A]$ be
  such that $s[\bar E] - [\bar A] = \left[
  \begin{smallmatrix}
    s[E] -[A] & [B]\\
    [C] & [D]
  \end{smallmatrix}
\right]$. Consider the previously defined graph $G(s [\bar E]- [ \bar
A])$, and notice that a path from $\mathcal{U}$ to $\mathcal{Y}$ in
the digraph associated with the structured system corresponds,
possibly after relabeling the output variables, to a cycle in
involving input/output vertices in $G(s [\bar E]- [ \bar A])$. Observe
that there are only $|\mathcal{U}|-1$ such (disjoint) cycles. Hence,
there is no cycle family of length $N$, being $N$ the size of $[\bar
A]$, and the statement follows from Theorem \ref{structural_rank}.
%
\end{proof}

To conclude this section, note that Theorem \ref{thm:vulnerability}
extends \cite{JWW:91} to regular descriptor systems with constraints
on parameters.

\subsection{Network vulnerability with unknown initial
  state}\label{sec:inv_zeros} 
If the failure initial state is unknown, then a vulnerability is
identified by the existence of a pair of initial conditions $x_1$ and
$x_2$, and an attack $u(t)$ such that $y(x_1,0,t) = y(x_2,u,t)$, or,
equivalently, by the existence of invariant zeros for the given
cyber-physical system. We will now show that, provided that a
cyber-physical system is left-invertible, its invariant zeros can be
computed by simply looking at an associated nonsingular state space
system. Let the state vector $x$ of the descriptor system \eqref{eq:
  cyber_physical_fault} be partitioned as $[x_1^\transpose \;
x_2^\transpose]^\transpose$, where $x_1$ corresponds to the dynamic
variables. Let the network matrices $E$, $A$, $B$, $C$, and $D$ be
partitioned accordingly, and assume, without loss of generality, that
$E$ is given as $E = \textup{blkdiag}(E_{11},0)$, where $E_{11}$ is
nonsingular. In this case, the descriptor model \eqref{eq:
  cyber_physical_fault} reads\,as
\begin{align}
\label{eq:partitioned_descriptor_system}
	\begin{split}
    E_{11} \dot x_1 (t)&= A_{11} x_1(t) + B_1 u(t) + A_{12} x_2(t)\,,\\
    0 &= A_{21} x_{1}(t) + A_{22} x_{2}(t) + B_{2} u(t)\,, \\
    y(t) &= C_{1}x_{1}(t) + C_{2} x_{2}(t)  + Du(t)
    \,.
  \end{split}
\end{align}
Consider now the associated nonsingular state space system which is
obtained by regarding $x_{2}(t)$ as an external input to the
descriptor system \eqref{eq:partitioned_descriptor_system} and the
algebraic constraint as output:
\begin{align}
	\label{eq:associated_nonsingular_system}
	\begin{split}
  \dot x_1 (t)&= E_{11}^{-1}A_{11} x_1(t) + E_{11}^{-1} B_1 u(t) +E_{11}^{-1} A_{12} x_2(t),\\
  \tilde y(t) & =
  \begin{bmatrix}
    A_{21}\\
    C_1
  \end{bmatrix}
  x_1(t) + 
  \begin{bmatrix}
    A_{22}  & B_2\\
    C_2 & D
  \end{bmatrix}
  \begin{bmatrix}
    x_2(t)\\
    u(t)
  \end{bmatrix}
    \,.
  \end{split}
\end{align}

\begin{theorem}{\bf \emph{(Equivalence of invariant
      zeros)}}\label{thm:inv_zero}
  Consider the descriptor system \eqref{eq: cyber_physical_fault}
  partitioned as in \eqref{eq:partitioned_descriptor_system}.
  Assume that, for the corresponding structured system
  $([E],[A],[B],[C],[D])$, there exists a linking of size
  $|\mathcal{U}|$ from $\mathcal{U}$ to $\mathcal{Y}$.
  Then, in almost all admissible realizations, the invariant zeros of
  the descriptor system \eqref{eq:partitioned_descriptor_system}
  coincide with those of the associated nonsingular system
  \eqref{eq:associated_nonsingular_system}.
\end{theorem}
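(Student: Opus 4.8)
The plan is to compare the invariant-zero pencils of the two systems and show that, generically, the extra constraint coming from the algebraic equation in \eqref{eq:partitioned_descriptor_system} is automatically satisfied, so that the zero conditions of \eqref{eq:partitioned_descriptor_system} and \eqref{eq:associated_nonsingular_system} reduce to the same set. Recall that $s$ is an invariant zero of \eqref{eq:partitioned_descriptor_system} if there exist $x_1$, $x_2$, $g$, not all zero, with
\begin{align*}
  (sE_{11} - A_{11}) x_1 - A_{12} x_2 - B_1 g &= 0,\\
  A_{21} x_1 + A_{22} x_2 + B_2 g &= 0,\\
  C_1 x_1 + C_2 x_2 + D g &= 0,
\end{align*}
while $s$ is an invariant zero of \eqref{eq:associated_nonsingular_system} if there exist $x_1 \neq 0$, $x_2$, $g$ with the first equation above holding (after multiplying by $E_{11}^{-1}$, equivalently without it) and with both block rows of $\tilde y$ vanishing, i.e. exactly the second and third equations above plus $A_{21} x_1 + A_{22} x_2 + B_2 g = 0$ appearing already in the output. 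In fact, once one writes out the zero pencil $\left[\begin{smallmatrix} sE_{11}-A_{11} & -A_{12} & -B_1 \\ A_{21} & A_{22} & B_2 \\ C_1 & C_2 & D \end{smallmatrix}\right]$ for the descriptor system and the corresponding pencil for \eqref{eq:associated_nonsingular_system}, one sees the two pencils differ only in that the descriptor pencil has an $E$-block $\textup{blkdiag}(E_{11},0)$ in the state derivative position whereas the nonsingular one treats $x_2$ as an input; algebraically the $(x_1,x_2,g)$-conditions are \emph{identical} except that in the descriptor case the zero-input generator $x$ must have $x \neq 0$ as a vector in $\real^n$, whereas in the nonsingular case the state generator $x_1$ must be nonzero.

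So the first step is to observe that the two systems share the same zero pencil $P(s) := \left[\begin{smallmatrix} sE_{11}-A_{11} & -A_{12} & -B_1 \\ A_{21} & A_{22} & B_2 \\ C_1 & C_2 & D \end{smallmatrix}\right]$, and that $s$ is an invariant zero of \emph{either} system precisely when $P(s)$ drops rank, \emph{except} for the discrepancy on which part of the generating vector is required to be nonzero. The second step is to rule out the degenerate case: I must show that generically there is no invariant zero $s$ of \eqref{eq:associated_nonsingular_system} whose only generating vectors have $x_1 = 0$, and conversely no invariant zero of \eqref{eq:partitioned_descriptor_system} whose only generating vectors have $x = 0$ (vacuously) or — the real issue — whose $x_1 = 0$ while $(x_2,g)\neq 0$. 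A generating vector with $x_1 = 0$ forces $A_{12}x_2 + B_1 g = 0$, $A_{22}x_2 + B_2 g = 0$, $C_2 x_2 + D g = 0$; this is a purely constant (s-independent) linear system, and by the left-invertibility hypothesis — which by Theorem~\ref{thm:vulnerability} holds generically under the linking assumption — the matrix $\left[\begin{smallmatrix} A_{12} & B_1 \\ A_{22} & B_2 \\ C_2 & D\end{smallmatrix}\right]$ has full column rank for almost all realizations, so the only solution is $x_2 = 0$, $g = 0$. (Here I would invoke Lemma~\ref{lemma:algebraic}: full column rank of this matrix is the complement of an algebraic variety intersected with the admissible polytope, and the linking hypothesis guarantees it is not identically rank-deficient, e.g. by choosing a realization that zeroes out the off-linking edges as in the proof of Theorem~\ref{thm:vulnerability}.) Hence for almost all realizations every invariant zero of \eqref{eq:associated_nonsingular_system} admits a generating vector with $x_1 \neq 0$, which then, together with the recovered $x_2$, gives $x = [x_1^\transpose\;x_2^\transpose]^\transpose \neq 0$ and so is an invariant zero of \eqref{eq:partitioned_descriptor_system}; and conversely any invariant zero of \eqref{eq:partitioned_descriptor_system} with $x \neq 0$ must have $x_1 \neq 0$ (else $x_2 \neq 0$ contradicts the full-column-rank fact), hence is an invariant zero of \eqref{eq:associated_nonsingular_system}.

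The third step is bookkeeping: assemble the above into the two inclusions between the invariant-zero sets and conclude equality, being careful that ``almost all'' is preserved — we intersect finitely many generic conditions (non-degeneracy of $s[E]-[A]$ from (A1), full column rank of the constant matrix above, and left-invertibility of the overall pencil), each of which excludes only an algebraic variety, so by Lemma~\ref{lemma:algebraic} the good set is still dense in the admissible polytope. The main obstacle I anticipate is handling the degenerate generating vectors cleanly: one has to argue that left-invertibility of the \emph{descriptor} system really does translate into full column rank of the constant ``algebraic'' block $\left[\begin{smallmatrix} A_{12} & B_1 \\ A_{22} & B_2 \\ C_2 & D\end{smallmatrix}\right]$, which is not literally the same as left-invertibility of the transfer matrix; the right way to see it is that if this block were column-rank-deficient then the pencil $P(s)$ would be column-rank-deficient for \emph{all} $s$, i.e. the descriptor system would fail to be left-invertible, contradicting Theorem~\ref{thm:vulnerability} under the linking hypothesis. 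Once that equivalence is pinned down the rest is routine.
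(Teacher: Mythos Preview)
Your proposal is correct and follows essentially the same argument as the paper: both recognize that the descriptor and nonsingular pencils have the same kernel (the paper makes this explicit via the invertible left-multiplication by $\blkdiag(E_{11},-I,I)$), and both dispose of the only possible discrepancy --- a generating vector with $x_1=0$ but $(x_2,g)\neq 0$ --- by observing that such a vector would lie in the kernel of $P(s)$ for \emph{every} $s$, contradicting the left-invertibility furnished generically by Theorem~\ref{thm:vulnerability}. Your extra care with Lemma~\ref{lemma:algebraic} and the ``almost all'' bookkeeping is a bit more explicit than the paper's, but the substance is identical.
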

\begin{proof}
  From Theorem \ref{thm:vulnerability}, the structured descriptor
  system $([E],[A],[B],[C],[D])$ is structurally left-invertible. Let
  $(E,A,B,C,D)$ be a left-invertible realization.
  
  The proof now follows a procedure similar to \cite[Proposition
  8.4]{JT:06}. Let $s \in \mathbb C$ be an invariant zero for the
  nonsingular system \eqref{eq:associated_nonsingular_system} with
  state-zero direction $x_1 \neq 0$ and input-zero direction $u$, that
  is
\begin{equation*}
\begin{bmatrix}
0 \\ 0 \\ 0
\end{bmatrix}
=
\underbrace{
\left[\begin{array}{c|cc}
sI - E_{11}^{-1}A_{11} & -E_{11}A_{12} & - E_{11}^{-1} B_{1} \\ \hline
A_{21} & A_{22} & B_{2}\\
C_{1} & C_{2} & D
\end{array}\right]
}_{\subscr{P}{nonsingular}(s)}
\begin{bmatrix}
x_{1} \\ x_{2} \\ u
\end{bmatrix}
\,.
\end{equation*}
A multiplication of the above equation by
$\textup{blkdiag}(E_{11},-I,I)$ and a re-partioning of the resulting
matrix yields
\begin{equation}
\begin{bmatrix}
0 \\ 0 \\ 0
\end{bmatrix}
=
\underbrace{
\left[\begin{array}{cc|c}
sE_{11} - A_{11} & -A_{12} & - B_{1} \\ 
-A_{21} & -A_{22} & -B_{2}\\\hline
C_{1} & C_{2} & D
\end{array}\right]
}_{\subscr{P}{singular}(s)}
\begin{bmatrix}
x_{1} \\ x_{2} \\ u
\end{bmatrix}
\label{eq:partitioned_descriptor_system_pencil}
\,.
\end{equation}
Since $x_1 \neq 0$, we also have $x= [x_{1}^{\transpose} \;
x_{2}^{\transpose}]^{\transpose} \neq 0$. Then, equation
\eqref{eq:partitioned_descriptor_system_pencil} implies that $s \in
\mathbb C$ is an invariant zero of the descriptor system
\eqref{eq:partitioned_descriptor_system} with state-zero direction $x
\neq 0$ and input-zero direction $u$. We conclude that the invariant
zeros of the nonsingular system
\eqref{eq:associated_nonsingular_system} are a subset of the zeros of
the descriptor system \eqref{eq:partitioned_descriptor_system}.
In order to continue, suppose that there is $s \in \mathbb C$ which is
an invariant zero of the descriptor system
\eqref{eq:partitioned_descriptor_system} but not of the nonsingular
system \eqref{eq:associated_nonsingular_system}. Let $x =
[x_{1}^{\transpose} \; x_{2}^{\transpose}]^{\transpose} \neq 0$ and
$u$ be the associated state-zero and input-zero direction,
respectively. Since $\Ker(\subscr{P}{singular}(s)) =
\Ker(\subscr{P}{nonsingular}(s))$ and $s$ is not a zero of the
nonsingular system \eqref{eq:associated_nonsingular_system}, it
follows that $x_{1} = 0$ and $x_{2} \neq 0$. Accordingly, we have that
\begin{equation*}
\Ker\left(
\begin{bmatrix}
-A_{12} & -B_{1} \\ -A_{22} & -B_{2} \\ C_{2} & D
\end{bmatrix}
\right)
\neq \{\emptyset\}
\,.
\end{equation*}
It follows that the vector $[0^{\transpose} \; x_{2}^{\transpose}
\; u^{\transpose}]^{\transpose}$ lies in the nullspace of
$\subscr{P}{singular}(s)$ for each $s \in \mathbb C$, and thus the
descriptor system \eqref{eq:partitioned_descriptor_system} is not
left-invertible. In conclusion, if the descriptor system
\eqref{eq:partitioned_descriptor_system} is left-invertible, then its
invariant zeros coincide with those of the nonsingular system
\eqref{eq:associated_nonsingular_system}.
\end{proof}

It should be noticed that, because of Theorem \ref{thm:inv_zero},
under the assumption of left-invertibility, classical linear systems
results can be used to investigate the presence of structural
undetectable attacks in a cyber-physical system; see
\cite{JMD-CC-JW:03} for a survey of results on generic properties
of linear systems.


\section{Illustrative examples}\label{sec:example}
\subsection{An example of state attack against a power network}
Consider the power network model analyzed in Example \ref{Example:
  power network structural analysis} and illustrated in Fig.
\ref{power_network}, and let the variables $\theta_4$ and $\theta_5$
be affected, respectively, by the unknown and unmeasurable signals
$u_1(t)$ and $u_2(t)$. Suppose that a monitoring unit is allowed to
measure directly the state variables of the first generator, that is,
$y_1(t)=\delta_1(t)$ and $y_2(t)=\omega_1(t)$.


\begin{figure}
  \centering
  \includegraphics[width=.65\columnwidth]{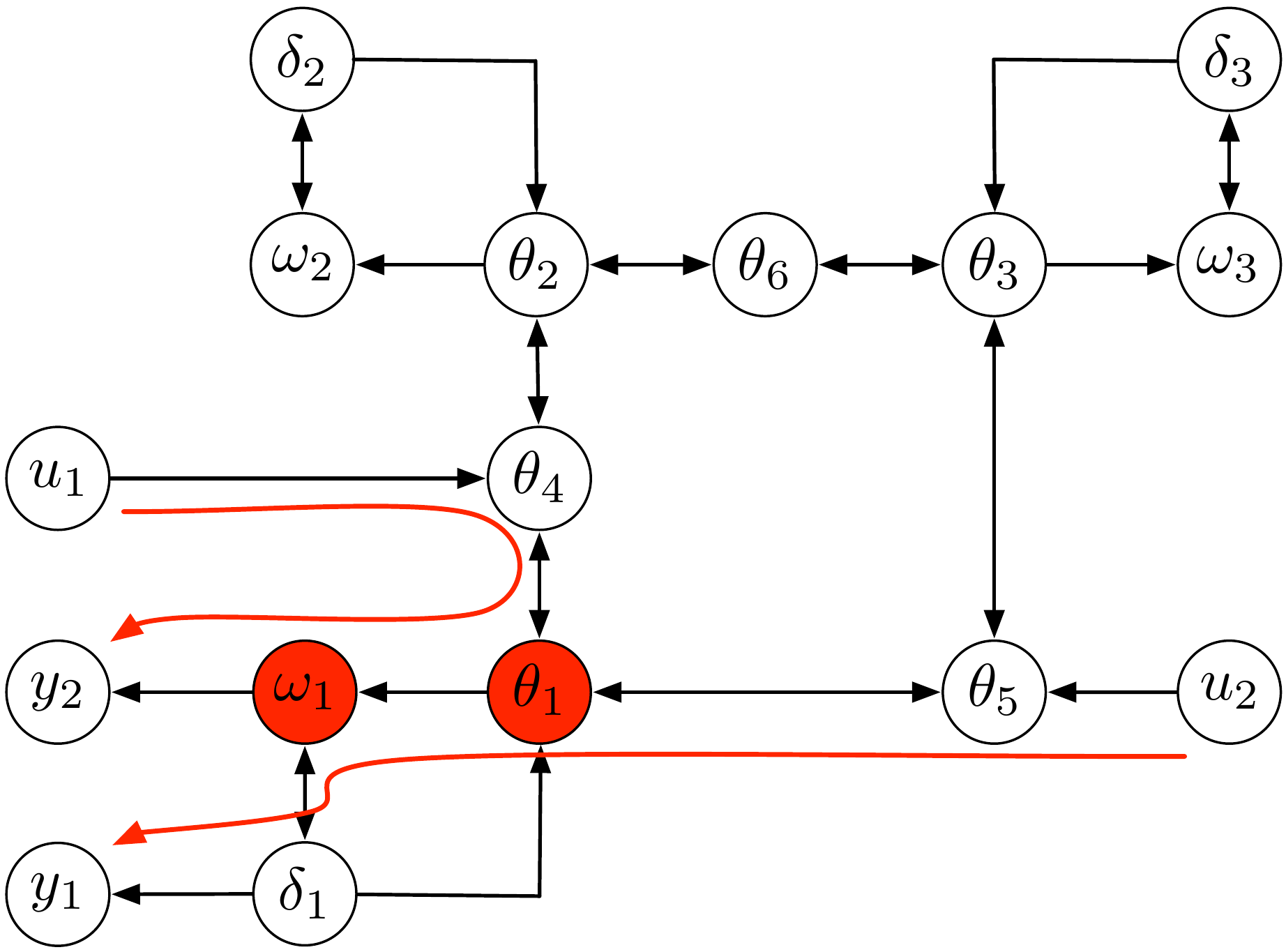}\\
  \caption{In the above network, there is no linking of size $2$ from
    the input to the output vertices. Indeed, the vertices $\theta_1$
    and $\omega_1$ belong to every path from $\{u_1,u_2\}$ to
    $\{y_1,y_2\}$. Two input to output paths are depicted in
    red.}\label{digraph_paths}
\end{figure}
Notice from Fig. \ref{digraph_paths} that the maximum size of a
linking from the failure to the output vertices is $1$, so that, by
Theorem \ref{thm:vulnerability}, there exists a structural
vulnerability. In other words, for every choice of the network
matrices, there exist nonzero $u_1(t)$ and $u_2(t)$ that are not
%
%
%
%
detectable through the measurements.\footnote{When these ouput-nulling
  inputs $u_1(t)$, $u_2(t)$ are regarded as additional loads, then
  they are entirely sustained by the second and third generator.}
  
We now consider a numerical realization of this system. Let the input
matrices be $B=[e_8 \; e_9]$ and $D = [0 \; 0]^{\transpose}$, the
measurement matrix be $C = [e_1 \; e_4]^\transpose$, and the system
matrix $A$ be as in equation \eqref{eq: power network descriptor
  system model} with $M_{g} = \text{blkdiag}(.125,.034,.016)$, $D_{g}
= \text{blkdiag}(.125,.068,.048)$, and
\begin{align*}
  \mc L = \!\left[
    \begin{smallmatrix}
      .058 & 0 & 0 & -.058 & 0 & 0 & 0 & 0 & 0\\
         0 & .063 & 0 & 0 & -.063 & 0 & 0 & 0 & 0\\
         0 & 0 & .059 & 0 & 0 & -.059 & 0 & 0 & 0\\
         -.058 & 0 & 0 & .235 & 0 & 0 & -.085 & -.092 & 0\\
         0 & -.063 & 0 & 0 & .296 & 0 & -.161 & 0 & -.072\\
         0 & 0 & -.059 & 0 & 0 & .330 & 0 & -.170 & -.101\\
         0 & 0 & 0 & -.085 & -.161 & 0 & .246 & 0 & 0\\
         0 & 0 & 0 & -.092 & 0 & -.170 & 0 & .262 & 0\\
         0 & 0 & 0 & 0 & -.072 & -.101 & 0 & 0 & .173
    \end{smallmatrix}
  \right].
\end{align*}
Let $U_1(s)$ and $U_2(s)$ be the Laplace transform of the attack
signals $u_1(t)$ and $u_2(t)$, and let
\begin{align*}
  \begin{bmatrix}
    U_1(s) \\ U_2(s)
  \end{bmatrix}
  =
  \underbrace{
  \begin{bmatrix}
    \frac{-1.024 s^4 - 5.121 s^3 - 10.34 s^2 - 9.584 s - 3.531}{s^4 + 5 s^3 + 9.865 s^2 + 9.173 s + 3.531}\\
    1
  \end{bmatrix}}_{\mathcal{N}(s)}
    \bar U(s),
\end{align*}
for {\em some arbitrary} nonzero signal $\bar U(s)$. Then it can be
verified that the failure cannot be detected through the measurements
$y_1(t)$ and $y_2(t)$. In fact, $\mathcal{N}(s)$ coincides with the
null space of the input/output transfer matrix.
An example is in Fig. \ref{sim_unstable}, where the second and the
third generator are driven unstable by the attack, but yet the first
generator does not deviate from the nominal operating condition.
\begin{figure}
  \centering
  \includegraphics[width=.65\columnwidth]{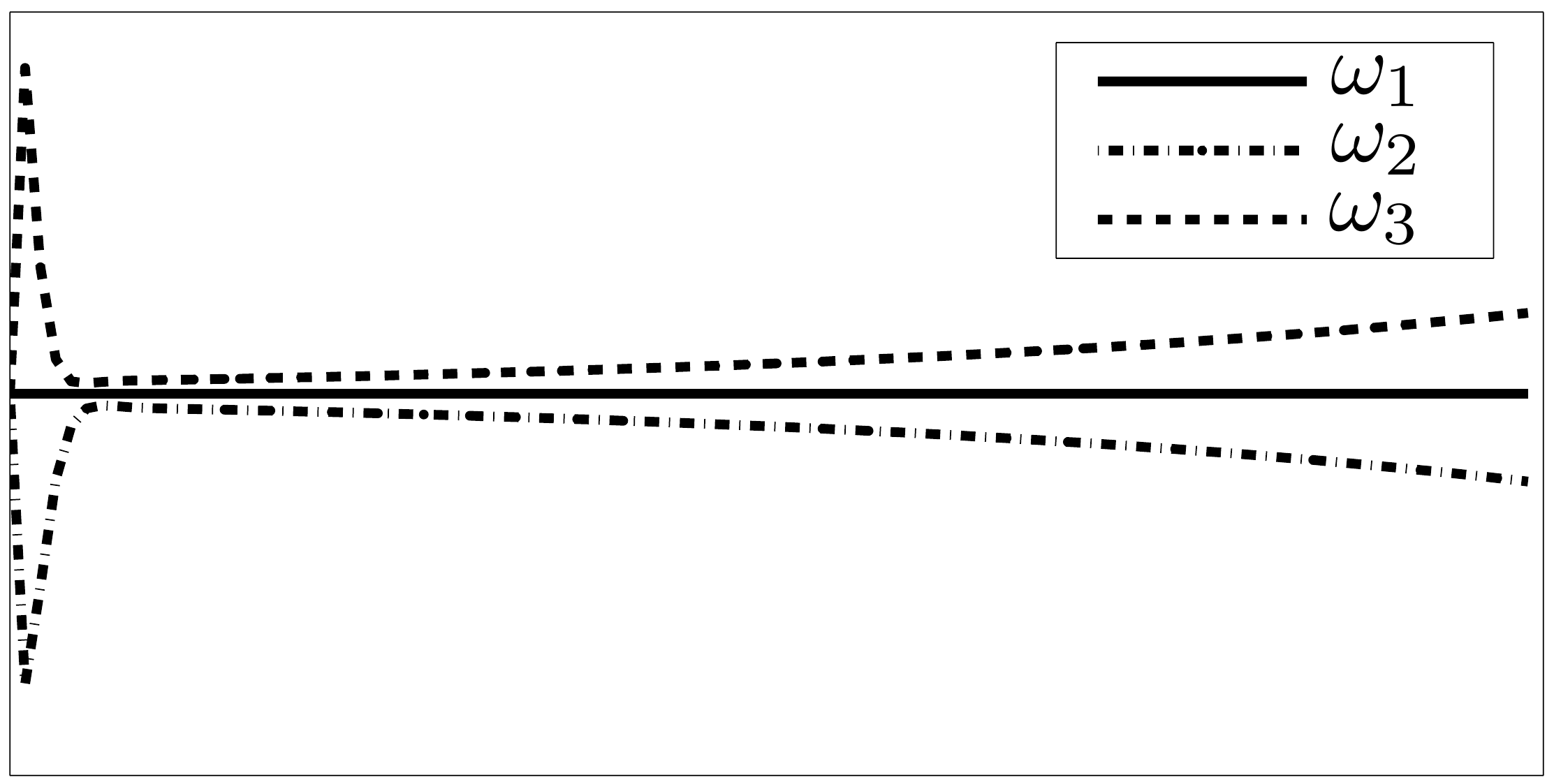}\\
  \caption{The velocities $\omega_2$ and $\omega_3$ are driven
    unstable by the signals $u_1(t)$ and $u_2(t)$, which are
    undetectable from the measurements of $\omega_1$ and
    $\delta_1$.}\label{sim_unstable}
\end{figure}

Suppose now that the rotor angle of the first generator and the
voltage angle at the $6$-th bus are measured, that is, $C=[e_1 \;
e_{12}]^\transpose$.  Then, there exists a linking of size $2$ from
$\mathcal{U}$ to $\mathcal{Y}$, and the system $(E,A,B,C)$ is
left-invertible. Following Theorem \ref{thm:inv_zero}, the invariant
zeros of the power network can be computed by looking at its reduced
system, and they are $-1.6864 \pm 1.8070i$ and $-0.8136 \pm
0.2258i$. Consequently, if the network state is unknown at the failure
time, there exists vulnerabilities that an attacker may exploit to
affect the network while remaining undetected. Finally, we remark that
such state attacks are entirely realizable by cyber attacks
\cite{AHMR-ALG:11}.

\subsection{An example of output attack against a power
  network}\label{sec:example_2}

\begin{figure}
    \centering
    \includegraphics[width=.9\columnwidth]{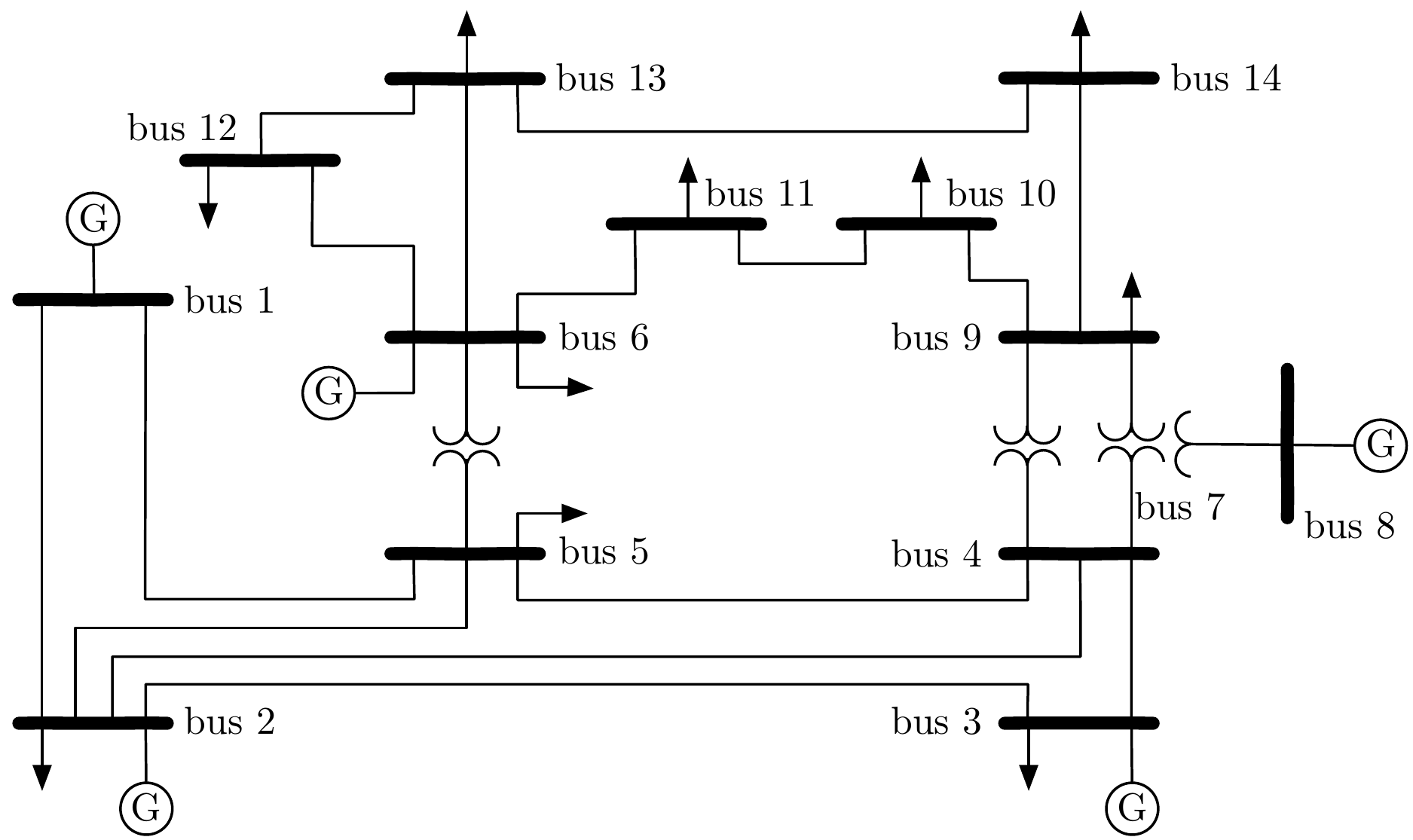}
    \caption{For the here represented IEEE 14 bus system, if the
      voltage angle of one bus is measured exactly, then a cyber
      attack against the measurements data is always detectable by our
      dynamic detection procedure. In contrary, as shown in
      \cite{YL-MKR-PN:09}, a cyber attack may remain undetected by a
      static procedure if it compromises as few as four measurements.}
    \label{fig:ieee14}
\end{figure}
Let the IEEE 14 bus power network (Fig. \ref{fig:ieee14}) be modeled
as a descriptor system as in Section \ref{example:power}. Following
\cite{YL-MKR-PN:09}, let the measurement matrix $C$ consist of the
real power injections at all buses, of the real power flows of all
branches, and of one rotor angle (or one bus angle). We assume that an
attacker can compromise all the measurements, independently of each
other, except for one referring to the rotor angle.

Let $k \in \mathbb{N}_0$ be the cardinality of the attack set. It is
known that an attack undetectable to a static detector exists if $k
\ge 4$ \cite{YL-MKR-PN:09}. In other words, due to the sparsity
pattern of $C$, there exists a signal $u_K(t)$, with (the same) four
nonzero entries at all times, such that $D u_K(t) \in \Image (C)$ at
all times. By Theorem \ref{Theorem: Static detectability of
  cyber-physical attacks} the attack set $K$ remains undetected by a
Static Detector through the attack mode $u_K (t)$. On the other hand,
following Theorem \ref{Theorem: Dynamic detectability of
  cyber-physical attacks}, it can be verified that, for the same
output matrix $C$, and independent of the value of $k$, there exists
\emph{no} undetectable (output) attacks for a dynamic monitor.

It should be notice that this result relies on the fact that the rotor
angle measurement is known to be correct, because, for instance, it is
protected using sophisticated and costly security methods
\cite{ARM-RLE:10}. Since the state of the IEEE 14 bus system can be
reconstructed by means of this measurement only (in a system theoretic
sense, the system is observable by measuring one generator rotor
angle), the output attack $D u(t)$ is easily identified as $D u(t) =
y(t) - C \hat x(t)$, where $\hat x (t) = x(t)$ is the reconstructed
system state at time $t$.

\subsection{An example of state and output attack against a water
  supply network}
\begin{figure}
    \centering
    \includegraphics[width=1\columnwidth]{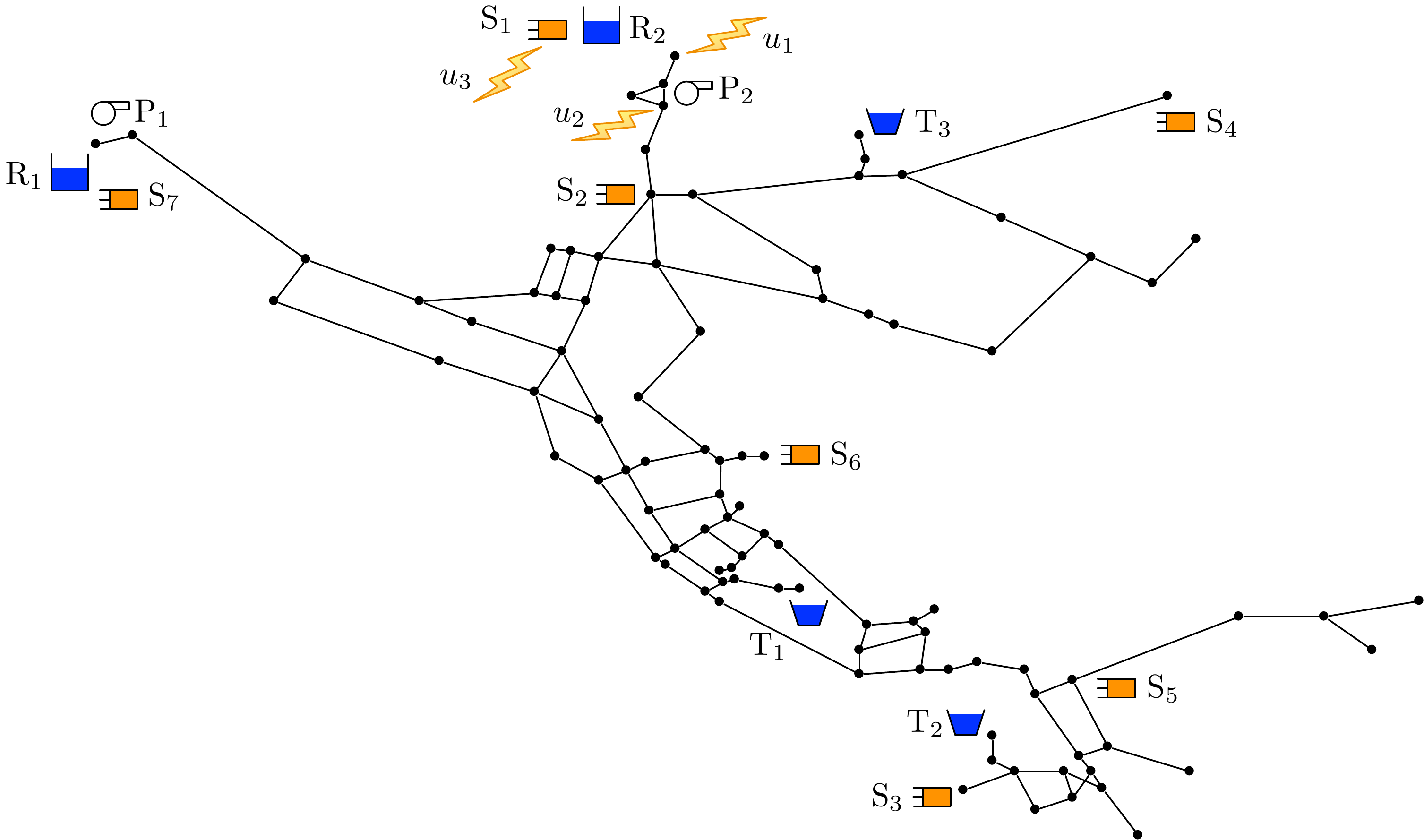}
    \caption{This figure shows the structure of the EPANET water
      supply network model \# 3, which features $3$ tanks
      ($\text{T}_1$, $\text{T}_2$, $\text{T}_3$), $2$ reservoirs
      ($\text{R}_1$, $\text{R}_2$), $2$ pumps ($\text{P}_1$,
      $\text{P}_2$), $96$ junctions, and $119$ pipes. Seven pressure
      sensors ($\text{S}_1, \dots, \text{S}_7$) have been installed to
      monitor the network functionalities. A cyber-physical attack to
      steal water from the reservoir $\text{R}_2$ is reported. Notice
      that the cyber-physical attack features two state attacks
      ($u_1$, $u_2$) and one output attack ($u_3$).}
    \label{fig:epanet3}
\end{figure}
 
Consider the water supply network EPANET 3 linearized at a steady
state with non-zero pressure drops \cite{EPANET2:00}. The water
network model as well as a possible cyber-physical attack are
illustrated in Fig. \ref{fig:epanet3}. The considered cyber-physical
attack aims at stealing water from the reservoir $\text{R}_2$ while
remaining undetected from the installed pressure sensors $\text{S}_1,
\dots, \text{S}_7$. In order to achieve its goal, the attacker
corrupts the measurements of sensor $\text{S}_1$ (output attack), it
steals water from the reservoir $\text{R}_2$ (state attack), and,
finally, it modifies the input of the control pump $\text{P}_2$ to
restore the pressure drop due to the loss of water in $\text{R}_2$
(state attack). We now analyze this attack in more details.

Following the modeling in Section \ref{example:water}, an index-one
descriptor model describing the evolution of the water network in
Fig. \ref{fig:epanet3} is computed. For notational convenience, let
$x_1 (t)$, $x_2 (t)$, $x_3 (t)$, and $x_4 (t)$ denote, respectively,
the pressure at time $t$ at the reservoir $\text{R}_2$, at the
reservoir $\text{R}_1$ and at the tanks $\text{T}_1$, $\text{T}_2$ and
$\text{T}_3$, at the junction $\text{P}_2$, and at the remaining
junctions. The index-one descriptor model reads as
\begin{align*}
  \begin{bmatrix}
    \dot x_1(t) \\ M \dot x_2(t) \\ 0 \\ 0
  \end{bmatrix}
  &=
  \begin{bmatrix}
    0 & 0  & 0 & 0\\
    0 & A_{22} & 0 & A_{24}\\
    A_{31} & 0 & A_{33} & A_{34}\\
    0 & A_{42} & A_{43} & A_{44}
  \end{bmatrix}
  \begin{bmatrix}
    x_1(t) \\ x_2(t) \\ x_3(t) \\ x_4(t)
  \end{bmatrix},
\end{align*}
where the pattern of zeros is due to the network interconnection
structure, and $M = \text{diag}(1,A_1,A_2,A_3)$ corresponds to the
dynamics of the reservoir $\text{R}_1$ and the tanks $\text{T}_1$,
$\text{T}_2$, and $\text{T}_3$. With the same partitioning, the attack
signature reads as $B = [B_1 \; B_2 \; 0]$ and $D = [0 \; 0 \; D_1]$,
where
\begin{align*}
  B_1 &=
  \begin{bmatrix}
    1 & 0 & 0 & 0
  \end{bmatrix}^\transpose,\;
  B_2 =
  \begin{bmatrix}
    0 & 0 & 1  & 0
  \end{bmatrix}^\transpose,\; \text{and } \\
  D_1 &=
  \begin{bmatrix}
    1 & 0 & \dots & 0
  \end{bmatrix}^\transpose.
\end{align*}
Let the attack $u_2 (t)$ be chosen as $u_2(t) = - A_{31}
x_1(t)$. Then, the state variables $x_2$, $x_3$, and $x_4$ are
decoupled from $x_1$. Consequently, the attack mode $u_1$ does not
affect the dynamics of $x_2$, $x_3$, and $x_4$. Let $u_1(t) = -1$, and
notice that the pressure $x_1 (t)$ decreases with time (that is, water
is being removed from $\text{R}_2$). Finally, for the attack to be
undetectable, since the state variable $x_1$ is continuously monitored
by $\text{S}_1$, let $u_3 (t) = - x_1(t)$. It can be verified that the
proposed attack strategy allows an attacker to steal water from the
reservoir $\text{R}_2$ while remaining undetected from the sensors
measurements. In other words, the attack $(Bu(t),Du(t))$, with $u(t) =
[u_1^\transpose (t) \; u_2^\transpose (t) \; u_3^\transpose
(t)]^\transpose$, excites only zero dynamics for the water network
system in Fig. \ref{fig:epanet3}.

We conclude this section with the following remarks. First, for the
implementation of the proposed attack strategy, neither the network
initial state, nor the network structure besides $A_{31}$ need to be
known to the attacker. Second, the effectiveness of the proposed
attack strategy is independent of the sensors measuring the variables
$x_3$ and $x_4$. On the other hand, if additional sensors are used to
measure the flow between the reservoir $\text{R}_2$ and the pump
$\text{P}_2$, then an attacker would need to corrupt these
measurements as well to remain undetected. Third and finally, due to
the reliance on networks to control actuators in cyber-physical
systems, the attack $u_2 (t)$ on the pump $\text{P}_2$ could be
generated by a cyber attack \cite{AHMR-ALG:11}.


\section{Conclusion}\label{sec:conclusion}
For cyber-physical systems modeled by linear time-invariant descriptor
systems, we have analyzed fundamental limitations of static, dynamic,
and active attack detection and identification monitors. We have
rigorously shown that a dynamic detection and identification monitor
exploits the network dynamics and outperforms the static counterpart,
while requiring, possibly, fewer measurements. Additionally, we have
shown that active monitors have the same limitations as passive
dynamic monitors. Finally, we have described graph theoretic
conditions for the existence of undetectable and unidentifiable
attacks. These latter conditions exploit the system interconnection
structure, and they hold for almost all compatible numerical
realizations. In the companion paper \cite{FP-FD-FB:10yb} we develop
centralized and distributed attack detection and identification
monitors.


\bibliographystyle{IEEEtran}
\bibliography{alias,Main,FB}









\end{document}